\newtheorem{assumption}[theorem]{Assumption}
\newtheorem{remark}[theorem]{Remark}
\title{Event-triggered robust control of linear systems: Sliding mode cone method\thanks{This work was supported in part by the National Natural Science Foundation of China under Grant 62173308 and Grant 62303422; in part by the Natural Science Foundation of Zhejiang Province of China under Grant LRG25F030002 and Grant LQ23F030006; in part by Zhejiang Province Leading Geese Plan under Grant 2025C01056; and in part by Jinhua Science and Technology Project under Grant 2022-1-042.}}
\author{Bangxin Jiang\thanks{School of Computer Science and Technology, Zhejiang Normal University, Jinhua 321004, China ({\tt jiangbangxin@zjnu.edu.cn}).}
\and Ying Liu\thanks{School of Mathematical Sciences, Zhejiang Normal University, Jinhua 321004, China ({\tt liuying59@zjnu.edu.cn}).}
\and Yang Liu\thanks{Corresponding author. School of Mathematical Sciences, Zhejiang Normal University, Jinhua, 321004, China, and also the Hangzhou School of Automation, Zhejiang Normal University, Hangzhou, 311231, China ({\tt liuyang@zjnu.edu.cn}).}
\and Jianquan Lu\thanks{Department of Systems Science, School of Mathematics, Southeast University, Nanjing 210096, China ({\tt jqluma@gmail.com}).}
\and Weihua Gui\thanks{School of Automation, Central South University, Changsha, 410083, China ({\tt gwh@csu.edu.cn})}.
}
\date{}
\begin{document}

\maketitle
\begin{abstract}
In this paper, we investigate the global robust stabilization of linear time-invariant systems by using event-triggered sliding mode control (SMC). Different from the practical sliding mode band, which is commonly used in previous studies on event-triggered SMC, a new concept of ideal sliding mode cone is proposed in this paper. Specifically, we design a hybrid event-triggering mechanism that takes into account both the size and direction shift of the error state. The proposed event-triggered SMC law is shown to enforce and sustain the system state in the ideal sliding mode cone. Moreover, the state of the closed-loop system can asymptotically converge to the equilibrium point, rather than merely to a neighborhood of it, which is usually difficult to handle by using practical sliding mode band. Technically speaking, to achieve strong convergence, the triggering frequency should naturally be as high as possible due to the existence of the external disturbances, but this will also increase the communication load. Hence, to balance the asymptotic convergence and frequent triggering near the equilibrium point that is the price paid for achieving asymptotic stability, we extend the obtained results to the case of practical sliding mode cone. In addition, it is verified that the ETM is global, namely, the inter-event times are uniformly lower bounded from zero globally. Further, a practical application for the quadrotor unmanned aerial vehicles is presented. Finally, three illustrative examples are given to demonstrate the effectiveness of the obtained results.
\end{abstract}

\begin{keywords}
sliding mode control, event-triggered control, sliding mode cone, robust asymptotic stability, inter-event time.
\end{keywords}

\begin{AMS}34D20, 93B12, 93C05, 93C65, 93D09\end{AMS}

\pagestyle{myheadings} \thispagestyle{plain} \markboth{Bangxin Jiang, Ying Liu, Yang Liu, Jianquan Lu and Weihua Gui}{Event-triggered robust control of linear systems:  Sliding mode cone method}

\section{Introduction}
Sliding mode control (SMC) is a control strategy known for its robustness, ensuring stable system performance in the presence of external perturbations and uncertainties \cite{shtessel2014sliding}. The core idea behind SMC is to design an appropriate sliding surface and a corresponding controller, so that the state of system reaches this predesigned surface in finite time and then slides along it asymptotically towards the equilibrium point \cite{edwards1998sliding,1101446,doi:10.1080/00207179208934240}. Once the system state reaches this sliding surface, it becomes highly insensitive to parameter variations and disturbances, ensuring reliable and consistent control performance. Due to these characteristics, research on SMC is actively conducted across a wide range of fields, concentrating on various aspects of this control technique \cite{doi:10.1137/22M1535309,doi:10.1137/1.9781611975840,li2021sliding,niu2005robust,polyakov2023homogeneous,yang2012sliding,10323267}. Up to now, SMC has been applied to many engineering systems, such as wind energy conversion systems \cite{en15051625,MOUSAVI2022112734}, rigid spacecraft and robot systems \cite{9774990,liu2018event}, unmanned marine vehicle systems \cite{8705012,yan2018sliding}, etc.

Time-triggered control strategies typically contain periodic execution intervals \cite{doi:10.1137/23M1571502}, defined upper and lower bounds, as well as some global conditions such as average dwell time \cite{doi:10.1137/20M1317037} and persistent dwell time (see \cite{831330} and \cite{7063896}). However, these time-triggered control strategies do not make use of the information of the system state, which leads to needless heavy workloads when computational and communication resources may be more usefully assigned to some other tasks. Research interest in event-triggered control (ETC) has arisen because of these limitations of time-triggered control \cite{liu2020event,wu2022dynamic}. Technically
speaking, the control signals only need to be updated at some
discrete instants, which are generated by a proposed state dependent
condition and such a condition is often called the event-triggering mechanism (ETM). Compared with time-triggered control, the event-triggered control strategy can effectively mitigate the unwarranted expenditure of control and communication resources. Thus, the research on ETC is of great practical significance (see \cite{heemels2012periodic,10428061,jiang2024periodic}).

Event-triggered sliding mode control combines both the robustness of SMC and the efficiency of ETC, and has therefore been extensively investigated \cite{chen2025design,cucuzzella2020event,10201802,kumari2020event,liu2018event,ma2021event,9204801,yan2023continuous}. In this strategy, transmissions are often triggered by changes of the system state, typically when the error state exceeds a threshold, rather than happening at predetermined instants \cite{jiang2021event}. And the practical sliding mode band is achieved by using event-triggered SMC, that is, any desired band size around the sliding manifold can be obtained. Generally speaking, in event-triggered SMC, the discretization of the control signal leads to the case that the state of the system cannot be continuous motion on the surface, but oscillates within a practical sliding mode band (see \cite{behera2021survey,bandyopadhyay2018event,7048489,cucuzzella2020event,kumari2020event}). The size of this band is primarily determined by the parameters inherent in the event-triggering mechanism \cite{7048489}. Furthermore, methods on the basis of the practical sliding mode band have yielded a number of interesting results and applications \cite{7448882,BEHERA201861,liu2018event,ye2022event}. However, previous studies on event-triggered SMC utilized the idea of practical sliding mode band and corresponding triggering conditions only considered the size of the error state. Maybe due to this fact, the global ETM that can ensure a unified lower bound for inter-event times globally is not fully investigated and designed. 

The global ETM for the results on practical sliding mode band was considered in few results (see \cite{7448882} and \cite{behera2021survey}) by designing the switching gain as a function of the sampled value of the state. While, it can lead to the result that the steady-state bound is
also determined by the sampled value of system state. Recently, \cite{behera2025event} provided a global ETM to effectively avoid the Zeno phenomenon globally by using digital redesign method. However, the event-triggered SMC in \cite{behera2025event} seems complicated and the control cost is increased by using additional digital redesign. Actually, the idea of the practical sliding mode band provides a valuable framework for event-triggered SMC by confining system trajectories within the vicinity of the sliding manifold, but it focuses exclusively on the magnitude of the error state. 
Namely, these results often ignored the direction information of error state, which may cause unnecessary resource loss. Specifically, there are cases where the system state, although not within the sliding mode band, does not have a significant deviation in terms of directional shift. Consequently, from the perspective of direction, there is no need to update the controller. The detailed discussions will be given in Section \uppercase\expandafter{3}.\\ 
\indent Although there are many significant results on event-triggered SMC, three key points are worth mentioning. First, the ETM structures designed in the existing related results only involve the size of error state, namely, its direction shift that can be helpful for the improvement of ETM is often ignored. Second, most of the previous literature only addresses the Zeno phenomenon in a semi-global scope, that is, the issue of positive lower bound is not well solved for inter-event time when the system state is infinite.
Third, the previous results on event-triggered SMC only ensure that the state of system can be ultimately bounded, rather than asymptotically converge to the equilibrium point.\\
\indent Motivated by the above-mentioned discussions, we are devoted to investigating the event-triggered SMC by using the new concept of ideal sliding mode cone. To be specific, to ensure global robust asymptotic stability of the linear time-invariant systems, we present a hybrid ETM, which includes the information of both the size and direction shift of error state. In addition, the proposed event-triggered SMC strategies can guarantee that systems trajectories are drawn towards the ideal sliding mode cone and asymptotically converge to the equilibrium point. The contributions of this article can be highlighted as follows:  

\begin{itemize}
	\item[(1)] Compared to previous results on event-triggered SMC based on practical sliding mode band (see \cite{7448882,behera2021survey,bandyopadhyay2018event,behera2025event,7048489}), we propose a new concept of the ideal sliding mode cone in this paper. Based on this concept, we design some event-triggered SMC laws with a hybrid ETM to achieve global robust stability of the addressed systems. 
	\item[(2)] The proposed hybrid ETM considers the information of both the size and direction shift of the error state, which can make energy utilization more efficient and ensure higher accuracy. It is shown that such an ETM is beneficial for the reduction of update frequency for control signals. Moreover, it is verified that the inter-event time possesses a positive lower bound if the state is relatively large and even infinite, which has not been well addressed in previous results (see  \cite{behera2021survey,bandyopadhyay2018event,7048489}).
	\item[(3)] Note that the existing results on practical sliding mode band can ensure that there is a lower bound on the inter-event time semi-globally, and the system state can only achieve ultimate boundedness (e.g., \cite{chen2025design,cucuzzella2020event,kumari2020event,liu2018event}). However, by using the concept of ideal sliding mode cone, we can verify that the inter-event time possesses a positive lower bound even if the state is infinite. Moreover, robust asymptotic stability of the addressed system can be achieved at the cost of potential frequent updating of control signals when the state is near the equilibrium point.
	\item[(4)] In order to balance the asymptotic convergence and the high-frequency triggering near the equilibrium point, the concept of ideal sliding mode cone is extended to the more practical case, i.e., practical sliding mode cone. Then, a global ETM is derived to guarantee a unified positive lower bound for all
	inter-event times without high-frequency triggering near the origin. To further demonstrate the feasibility of this idea, an application to the quadrotor unmanned aerial vehicle (UAV) systems is obtained.
\end{itemize}

The remaining portion is structured as follows: Section \uppercase\expandafter{2} proposes the model of the LTI systems with external disturbances, essential assumptions, and the related concepts. Section \uppercase\expandafter{3} presents the primary outcomes of the study, namely, some sufficient conditions for global robust stabilization of the addressed systems and Zeno-free phenomenon. Section \uppercase\expandafter{4} presents the concept of practical sliding mode cone to solve the lower bound existence issue for inter-event time near the equilibrium point. Section \uppercase\expandafter{5} applies the developed control framework to practical scenarios to show the effectiveness of the derived results. Finally, three numerical examples are given in Section \uppercase\expandafter{6} to illustrate the validity of our results. Section \uppercase\expandafter{7} summarizes the full study.

\section{Preliminaries}
Let $\mathbb{Z}_+$ denote the set of all positive integers and $\mathbb{N}$ denote the set of all natural numbers. The sets of $n$-dimensional real vectors and $n\times m$-dimensional real matrices are denoted by $\mathbb{R}^{n}$ and $\mathbb{R}^{n\times m}$, respectively. The notation $A>0$ ($A<0$) indicates that $A$ is a positive (negative) definite, symmetric matrix.
The transpose and inverse of matrix $A\in \mathbb{R}^{n\times n}$ is denoted by $A^T$ and $A^{-1}$, respectively. Let $|\cdot|$ denote the absolute value function. The Euclidean norm of a vector $x\in \mathbb{R}^{n}$ is defined by $\|x\|=\sqrt{x^{T}x}$. The inner product \(u \cdot v\) is defined as the sum of the products of the corresponding entries of the vectors \(u\) and \(v\). The induced norm of matrix $A$ is defined as $\|A\|=\sqrt{\lambda_{\rm max}\{A^{T}A\}}$, where $\lambda_\textrm{max}\{A^{T}A\}$ denotes the maximum eigenvalue of the matrix $A^TA$.

We consider a linear system
\begin{equation}\label{LTI}
	\begin{split}
		\dot{\tilde{x}}(t)=\tilde{A}\tilde{x}(t)+\tilde{B}(u(t)+d(t)),
	\end{split}
\end{equation}
with initial condition $\tilde{x}(t_0)=\tilde{x}_0$, where $\tilde{x}(t)\in\mathbb{R}^n$, $u(t)\in\mathbb{R}$ are the system state and control input, respectively. It is assumed here that the external disturbance $d(t)\!\in\mathbb{R}$ is bounded by ${d}_\textrm{max}$, i.e., $\textrm{sup}_{t\ge{t}_0}\left|d\left(t\right)\right|\le{d}_\textrm{max}$, where ${d}_\textrm{max}$ is a known positive constant. And matrices  $\widetilde{A}\in\mathbb{R}^{n\times n}$ and $\widetilde{B}\in\mathbb{R}^{n\times 1}$ are constant matrices.\\
\indent Referring to previous literature \cite{bandyopadhyay2018event}, the following assumption is presented.
\begin{assumption}
	The pair $(\tilde{A},\tilde{B})$ is controllable.
\end{assumption}

At first, system \eqref{LTI} is transformed into regular form to design SMC. Actually, the system under proper nonsingular linear transformation $x=T_r\tilde{x}$ (see \cite{BEHERA201861,jiang2021event}) can be described by 
\begin{subequations}
	\begin{align}	
		\dot{x}_1(t)&={A}{_{11}}{x}_1(t)+{A}{_{12}}{x}_2(t)\tag{2a}\\
		\dot{x}_2(t)&={A}{_{21}}{x}_1(t)+{A}{_{22}}{x}_2(t)+u(t)+d(t)\tag{2b}
	\end{align}
\end{subequations}
where $x_1\in\mathbb{R}^{n-1}$ and $x_2\in\mathbb{R}$, and matrices $A_{ij},i,j\in \{1,2\}$ are of appropriate dimensions. System (2) can be rewritten in compact form as
\begin{equation}
	\dot{x}(t)=Ax(t)+B(u(t)+d(t)) \label{3}
\end{equation}
where  \[
A = \begin{bmatrix}
	A_{11} & A_{12} \\
	A_{21} & A_{22}
\end{bmatrix}
\] and $B=[0,0,...,1]^T\in\mathbb{R}^{n\times1}$.

Design the sliding variable 
\begin{equation}
	s=c^Tx
\end{equation} where $c\!\in\mathbb{R}^n$ such that $c\!=\!(c_1^T,1)^T$ with $c_1^T\!\in\!\mathbb{R}^{n-1}$. 

\noindent Therefore, the sliding manifold can be written by
\begin{equation*}
	S\triangleq\{x\in\mathbb{R}^n:s=c^Tx=0\}.
\end{equation*}
Here, ${c}_1$ is designed such that $A_{11}-A_{12}c_1^T$ is Hurwitz. According to Assumption 1, we can obtain that the pair $(A_{11},A_{12})$ is controllable \cite{edwards1998sliding}. Actually, due to the continuity of eigenvalues of matrix $A_{11}-A_{12}c_1^T$ concerning $c_1$, we can further find two sliding manifolds on either side of $S$.
To be specific, we define 
\begin{equation*}
	\hat{S}\triangleq\{x\in\mathbb{R}^n:\hat{s}=\hat{c}^Tx=0\}
\end{equation*}
\begin{flushleft}
	and
\end{flushleft}
\begin{equation*}
	\check{S}\triangleq\{x\in\mathbb{R}^n:\check{s}=\check{c}^Tx=0\}.	
\end{equation*}
Similarly, $\hat{c}_1$ and $\check{c}_1$ need to be selected such that both $A_{11}-A_{12}\hat{c}_1^T$ and $A_{11}-A_{12}\check{c}_1^T$ are Hurwitz.
\begin{remark}
	{\rm In particular, we take an example to illustrate the existence of sliding mode cone boundaries. Consider system \eqref{3} with $A=[4,\,6;-20,\,1]$, hence $A_{11}=4$ and $A_{12}=6$. We choose $c_{1}^{T}=3$, $\hat{c}_{1}^{T}=5$, and $\hat{c}_{1}^{T}=1$. The corresponding $1\times 1$-matrices are $A_{11}-A_{12}c_{1}^{T} = 4 - 6\times 3 = -14$, $A_{11}-A_{12}\hat{c}_{1}^{T} = 4 - 6\times5 = -26$, and $A_{11}-A_{12}\check{c}_{1}^{T} = 4 - 6\times1 = -2$. Since all of these are Hurwitz matrices, this verifies that sliding surfaces $\hat{S}$ (determined by $\hat{c}=[\hat{c}_{1}^{T}, 1]^T$) and $\check{S}$ (determined by $\check{c}=[\check{c}_{1}^{T}, 1]^T$) can indeed be designed on either side of the central sliding surface $S$ (determined by $c=[c_{1}^{T}, 1]^T$). More detailed discussions will be given in Section \uppercase\expandafter{4}}.
\end{remark}

Due to the fact that the ideal sliding mode cannot be acquired by using the event-triggered control strategy, many researchers turn to investigate the practical sliding mode band \cite{bandyopadhyay2018event}. The formal definition of practical sliding mode band is given below.
\begin{figure}[t]
	\centering
	\subfigure[Illustration of practical sliding mode band.]{\includegraphics[width=0.49\textwidth]{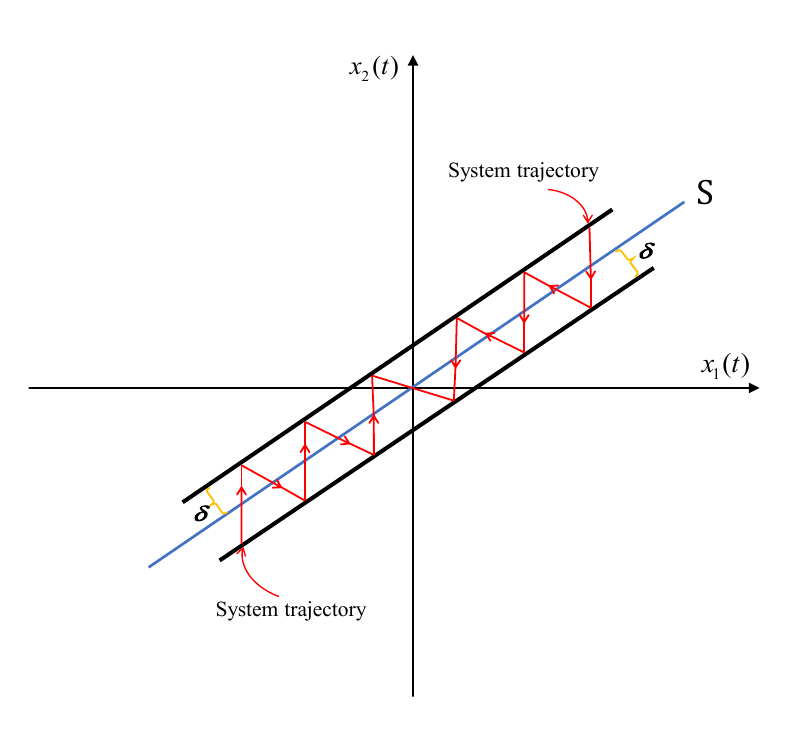}}
	\subfigure[Illustration of ideal sliding mode cone.]{\includegraphics[width=0.49\textwidth]{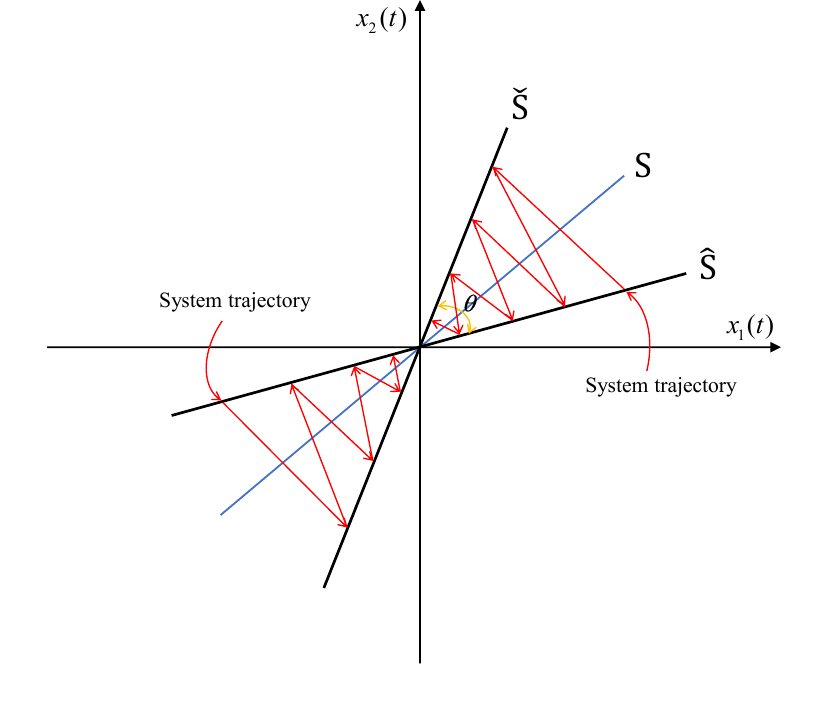}}
	\caption{Illustration of practical sliding mode band and ideal sliding mode cone.}
	\label{Fig. 1}
\end{figure}
\hfill	
\begin{definition}\label{PSMB}{\rm(Practical Sliding Mode Band  \cite{bandyopadhyay2018event}):}
	{\rm For any trajectory $x(t)$ of system \eqref{3} and a sliding manifold defined by $S$, system \eqref{3} is considered to be in practical sliding mode band if, for some positive $\delta>0$, there exists a time $t_1^*$ such that the sliding trajectories stay confined in the $\delta$-vicinity of sliding manifold for all time $t\ge t_1^*$. The bandwidth of the practical sliding mode band is given by $\delta$. In other words, $||s(t)||\le\delta$ for all $t\ge t_1^*$. In particular, the practical sliding mode is said to be ideal sliding mode if $\delta=0$.}
\end{definition}\\
\indent Actually, the practical sliding mode band can be obtained by the translation of the sliding mode surface $S$, and the ideal sliding mode cone that can be acquired through rotation of the sliding mode surface $S$ is investigated in this paper. Actually, it will be the key to ensure the global existence of positive lower bound for inter-event time. Specifically, the definition of ideal sliding mode cone is given as follows:
\begin{definition}{\rm (Ideal Sliding Mode Cone):}
	{\rm System \eqref{3} is said to be in ideal  sliding mode cone if there exists a ${t}_1^*\ge{t}_0$ such that the system state reaches and remains within the cone region $D\triangleq\left\lbrace x\in\mathbb{R}^n:\hat{s}(t)\check{s}(t)\le0\right\rbrace$ for all $t\ge t_1^*$. In addition, the angle $\theta$ between the sliding manifolds $\hat{S}$ and $\check{S}$ is defined by $\theta=\pi -\text{arccos}\left(\frac{\check{c}^T\cdot\,\hat{c}}{\left\|\hat{c}\right\| \left\|\check{c}\right\|}\right)$.}
\end{definition}
\begin{remark}
	{\rm The concept of ideal sliding mode cone is fundamentally distinct from the practical sliding mode band utilized in previous studies. This distinction is visually represented in Fig.~\ref{Fig. 1}(a) and Fig.~\ref{Fig. 1}(b). Specially, Fig. \ref{Fig. 1}(a) illustrates the practical sliding mode band, which confines the trajectories of system within a band around the sliding surface (see \cite{behera2021survey,bandyopadhyay2018event,7048489}). In contrast, Fig. \ref{Fig. 1}(b) depicts the ideal sliding mode cone, which can be obtained by rotating the sliding manifold $S$. Moreover, we can observe that the sliding manifold $\hat{S}$ and $\check{S}$ sit on either side of $S$, and the angel between $\hat{S}$ and $\check{S}$ is equal to $\theta$. With the ideal sliding mode cone concept, one can ensure that the trajectories of system converge asymptotically to the equilibrium point, which is a substantial improvement over the practical sliding mode band methodology. Moreover, the inter-event time possesses a unified positive lower bound even if the system state is infinite.}
\end{remark}

Many previous results on the event-triggered SMC are based on the concept of practical sliding mode band (see \cite{7448882,behera2021survey,BEHERA201861}). However, it is worth noting that the majority of these results based on the practical sliding mode band concept have a limitation that the inter-event time has no positive lower bound when the state is infinite. Different from the previous results, the concept of ideal sliding mode cone is proposed in this study to overcome the above difficulty. The detailed discussions will be presented in Section \uppercase\expandafter{3}.

\section{Main results}
In this section, a hybrid ETM is introduced, which is helpful for the achievement of global robust asymptotic stability and some useful criteria are derived for the addressed closed-loop system. First, by referring to \cite{7448882}, the control law for event-triggered SMC is designed as follows:
\begin{equation}\label{7}
	u(t)=-({c}^TB)^{-1}\left({c}^TAx(t_i)+K\,(x(t_i))\,\textrm{sign}s({t}_i)\right) 
\end{equation}
for all $t\in\left\lbrack{t}_i,{t}_{i+1}\right), i\in \mathbb{N}$. In this case, the control input is maintained at this value until the next triggering instant occurs. The error state is defined by 
\begin{equation}\label{8}
	e(t)=x(t_i)-x(t)
\end{equation}
with $e(t_i)=0$ for $t\in\left\lbrack{t}_i,{t}_{i+1}\right), i\in \mathbb{N}$.
Especially, the triggering instant $t_{i+1}\!=\textrm{min}(t_{i+1}^{\prime},t_{i+1}^{\prime\prime})$, $i\in\mathbb{N}$ is generated by the hybrid ETM:
\begin{equation}
	t_{i+1}^{\prime}=\!\textrm{inf}\left\lbrace t:\!t>t_i^{\prime},\frac{x^{T}(t)\cdot
		x(t_i)}{||x(t)||\> ||x({t}_{i})||}\le \textrm{cos}\frac{\theta}{2n}\right\rbrace \label{5}
\end{equation}
\begin{equation}
	t_{i+1}^{\prime\prime}=\!\textrm{inf}\left\lbrace t:\!t>{t}_i^{\prime\prime},|c^TAe(t)|\ge\sigma||x({t}_i)||+\beta\right\rbrace \label{6}
\end{equation}
where ${\sigma}$ and $\beta$ are two positive constants and $n\in\mathbb{Z}_+$. In addition, $\theta$ represents the magnitude of angle between the sliding manifolds $\hat{S}$ and $\check{S}$. Due to the fact that system \eqref{3} under control law \eqref{7} is characterized by a discontinuous dynamical system, the solutions to  system \eqref{3} are understood in the sense of Filippov \cite{filippov1988differential}.
\begin{remark}
	{\rm It is worth mentioning that the above hybrid ETM contains both  size (i.e., \eqref{5}) and direction shift (i.e., \eqref{6}) of the error state, which is different from the existing results. It will be shown that by using this hybrid ETM, the inter-event time can possess a unified positive lower bound regardless of the boundedness of the system state. In addition, the parameter $n$ in triggering rule \eqref{5} and the state-dependent control gain $K(x(t_i))$  are to be selected. Actually, if both of them are sufficiently large, control law \eqref{7} with hybrid ETM \eqref{5}-\eqref{6} can ensure that the system state reaches and remains in the ideal sliding mode cone in finite time. Some detailed discussions can be seen in Example 1 of Section \uppercase\expandafter{6}.} 
\end{remark}
\begin{remark}
	{\rm Clearly, the allowed direction deviation of system state (i.e., $\frac{\theta}{2n}$ in \eqref{5}) will decrease if $n$ increases. In fact, larger $n$ may lead to higher control frequency, but ensure better convergence. In addition, if given $K(x(t_i))$, increasing $\sigma$ or $\beta$ in ETM \eqref{6} generally can relax the number of triggers (larger $T_{i2}$ in \eqref{15b}), particularly when the state norm is relatively large.}
\end{remark}
\begin{theorem}\label{thm-bound}
{\rm Consider system \eqref{3} under event-triggered SMC \eqref{7} with hybrid ETM \eqref{5}-\eqref{6}. If $K(x({t}_i))$ is selected such that
\begin{equation}\label{9}
	K(x({t}_i))>{\left|{c}^TB\right|d}_\textrm{max}+\sigma||x(t_i)||+\beta,
\end{equation}
then system \eqref{3} can be enforced in the ideal sliding mode cone region, and global robust asymptotic stability of the closed-loop system can be achieved.}
\end{theorem}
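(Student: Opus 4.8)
The plan is to track the sliding variable $s=c^{T}x$ along the Filippov solutions of the closed-loop system and to split the argument into a reaching stage, a cone-invariance stage, and a convergence stage. Differentiating $s$ along \eqref{3}, inserting the control law \eqref{7}, and using $c^{T}B=1$ gives, for $t\in[t_i,t_{i+1})$,
\begin{equation*}
\dot{s}(t)=-c^{T}Ae(t)-K(x(t_i))\,\mathrm{sign}\,s(t_i)+c^{T}Bd(t),
\end{equation*}
with $e(t)=x(t_i)-x(t)$ as in \eqref{8}; this single identity drives the whole proof. On each inter-event interval the rule \eqref{6} enforces $|c^{T}Ae(t)|<\sigma\|x(t_i)\|+\beta$, while the disturbance bound gives $|c^{T}Bd(t)|\le|c^{T}B|d_{\mathrm{max}}$, so invoking \eqref{9} yields $\mathrm{sign}(s(t_i))\,\dot{s}(t)<-(K(x(t_i))-|c^{T}B|d_{\mathrm{max}}-\sigma\|x(t_i)\|-\beta)<0$. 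Hence $|s|$ decreases toward zero at a rate bounded away from zero whenever the sampled sign is correct; since $K(x(t_i))$ scales with $\|x(t_i)\|$ this estimate holds for arbitrarily large states (which is what makes the result global), and it drives the trajectory into a neighborhood of the central manifold $S$ in finite time. As $S\setminus\{0\}$ lies in the interior of $D$, the state is thereby delivered into the cone.

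Next I would prove forward invariance of $D$ using the direction-based rule \eqref{5}, which keeps the angle between $x(t)$ and the sampled state $x(t_i)$ below $\theta/(2n)$ on each inter-event interval. Since the half-opening of $D$ measured from $S$ to either edge $\hat{S},\check{S}$ is of order $\theta/2$, the admissible angular drift $\theta/(2n)$ (with $n\ge1$) is strictly smaller, so a trajectory that has entered $D$ cannot rotate across $\hat{S}$ or $\check{S}$ before the next trigger fires; this simultaneously excludes a spurious sign reversal of $s$ between samples, keeping the reaching estimate of the previous step valid. The conjunction of the two rules is essential here: \eqref{6} controls the transverse distance to $S$ and \eqref{5} controls the angular position, and only together do they confine the trajectory to the wedge $D$.

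Finally I would upgrade cone invariance to asymptotic convergence. Inside $D$ the constraint $\hat{s}\check{s}\le0$ pins the scalar component $x_2$ between $-\hat{c}_1^{T}x_1$ and $-\check{c}_1^{T}x_1$, so the reduced dynamics read $\dot{x}_1=(A_{11}-A_{12}c_1(\cdot)^{T})x_1$, where $c_1(\cdot)$ is a state-dependent convex combination of $\hat{c}_1$ and $\check{c}_1$. Because $\hat{c}_1,\check{c}_1$ are chosen near $c_1$ via the continuity-of-eigenvalues construction, the entire segment of matrices is Hurwitz, and the Lyapunov function $V=x_1^{T}Px_1$ of the nominal matrix $A_{11}-A_{12}c_1^{T}$ still satisfies $\dot{V}<0$ over this family; hence $x_1\to0$ and then $x_2\to0$ by the squeezing constraint, so $x\to0$. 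Robustness is automatic, the disturbance having already been dominated by $K$ in the reaching estimate.

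The step I expect to be the main obstacle is the passage to genuine asymptotic stability at the origin rather than merely to a neighborhood. Away from the origin the cone has positive absolute width and invariance is comfortable; but as $\|x\|\to0$ the wedge $D$ collapses to a point, and the constant $\beta>0$ in \eqref{6} means the transverse tolerance no longer shrinks with $\|x\|$. The delicate point is to show that the angular rule \eqref{5} forces the inter-event times to contract quickly enough near the origin to hold the state inside the vanishing cone, and to make the Lyapunov squeeze uniform down to $x=0$ despite the discontinuous control, with frequent triggering near the equilibrium being the price paid for asymptotic rather than ultimate-bounded behavior. This same tension between the fixed threshold in \eqref{6} and the vanishing cone width is exactly what motivates the practical sliding mode cone introduced later, and handling it rigorously is the crux of the argument.
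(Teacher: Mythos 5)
Your proposal follows essentially the same route as the paper's proof: a reaching phase in which the gain condition \eqref{9} dominates the $|c^{T}Ae|$ and disturbance terms so that $s$ is driven to $S$ in finite time, cone invariance via the angular rule \eqref{5} with the $\theta/(2n)$ margin, and convergence inside $D$ by writing $x_2=-\lambda_1\hat{c}_1^{T}x_1-\lambda_2\check{c}_1^{T}x_1$ and applying a single quadratic Lyapunov function valid across the convex family of edge matrices (the paper posits a common $P$ solving both endpoint Lyapunov equations, which is the same device as your continuity-based justification). The near-origin difficulty you flag is left at essentially the same informal level in the paper's own proof, which defers it to the practical sliding mode cone of Section 4.
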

\begin{proof}
	Consider Lyapunov function 
	\begin{equation*}
		V=\frac{1}{2}{s}^2.
	\end{equation*} 
	First, we shall show that for all time $t\in\left\lbrack{t}_i,{t}_{i+1}\right)$ and $i\in\mathbb{N}$, the state trajectories of the closed-loop system can be brought to the vicinity of sliding manifold $S$. Actually, taking the time derivative of $V$ along the trajectories of the addressed system, one can deduce that 
	\begin{equation}\label{10}
		\begin{aligned}
			\dot{V}&=s\cdot\dot{s}\\
			&=s(c^{T}Ax(t)+c^{T}Bu(t)+c^{T}Bd(t))\\
			&=s({c}^TAx(t)\!-\!{c}^TAx({t}_i)\\
			&\quad-K(x({t}_i))\,\textrm{sign}s({t}_i)\!+\!{c}^TBd(t)).
		\end{aligned}
	\end{equation}
	Recall $e\left(t\right)=x({t}_i)-x(t)$
	and \eqref{10} can be further deduced:
	\begin{equation}
		\begin{aligned}
			\dot{V}&=-s(t){c}^TAe(t)-s(t)\,K(x({t}_i))\,\textrm{sign}s({t}_i)+s(t){c}^TBd(t)\\
			&\le-s(t)\,K(x({t}_i))\,\textrm{sign}s({t}_i)+|s(t)|\,|c^{T}Ae(t)|\\
			&\quad+|s(t)|\,|c^{T}Bd(t)|.
		\end{aligned}
	\end{equation}
	When the system trajectories start from a domain where $\textrm{sign}s({t}_i)=\textrm{sign}s(t)$, the aforementioned relationship can be further simplified from \eqref{6} and \eqref{9} that 
	\begin{equation*}
		\begin{aligned}
			\dot{V}&\leq-|s(t)|\,K(x(t_i))+|s(t)|\,|c^TAe(t)|+|s(t)|\,|c^TBd(t)|\\
			&\le-|s(t)|K(x(t_i))+|s(t)|\sigma||x(t_i)||+|s(t)|\beta\\
			&\quad+|s(t)|\,|c^TB|d_\textrm{max}\\
			&=-|s(t)|\left(K(x(t_i))-\sigma||x(t_i)||-\beta-|c^TB|d_\textrm{max}\right)\\
			&\le-\xi|s(t)|
		\end{aligned}
	\end{equation*}
	where $\xi>0$ such that $K(x({t}_i))\ge\sigma||x(t_i)||+\beta+|c^TB|d_\textrm{max}+\xi$. We can observe that the trajectories of system are attracted to the sliding manifold as long as $\textrm{sign}s(t_{i})=\textrm{sign}s(t)$. Because $\dot{V}<0$, the trajectories eventually reach the sliding manifold $S$ at some instant $t_1^*$.\\
	\indent It is not difficult to see that when the state trajectories reach the sliding mode surface $S$, it is inside the ideal sliding mode cone. It shall be proved that once the state trajectories reach the sliding mode surface $S$, it will subsequently remain inside the ideal sliding mode cone. By referring to \cite{7448882}, we only need to consider the worst case, which is $s(t_{i})=0$. Since the angular size of the ideal sliding mode cone is $\theta$, and the sliding mode surface $S$ is exactly in the center of the ideal sliding mode cone, it can be seen from \eqref{5} that the state trajectories do not leave the ideal sliding mode cone before next triggering instant. Actually, when parameter $n$ is chosen to be sufficiently large, the angle shift allowed within each inter-event interval is small enough. Choosing proper parameter $n$ can ensure that the state remains in the ideal sliding mode cone for the next few inter-event intervals. In addition, in this process, the control law \eqref{7} has the effect of making the state reach sliding mode surface $S$ again, i.e., the center of the ideal sliding mode cone. Therefore, the system state will always remain in the ideal sliding mode cone. \\
	\indent Second, we shall prove that the global robust asymptotic stability of the addressed system can be derived. When the system is contained within the region of ideal sliding mode cone, there must be two time-varying parameters $\lambda_1(t)$ and ${\lambda}_2(t)$, satisfying ${\lambda}_1(t), {\lambda}_2(t)\in\left\lbrack0,1\right\rbrack$ and ${\lambda}_1(t)+{\lambda}_2(t)\equiv1$ such that
	\begin{equation*}
		\lambda_1(t)\,\hat{s}(t)+\lambda_2(t)\,\check{s}(t)\equiv0.
	\end{equation*}
	That is to say, we can obtain that
	\begin{equation*}
		\lambda_1(t)\,[\hat{c}_1^{T}x_{1}+x_{2}]+\lambda_2(t)\,[\check{c}_1^{T}x_{1}+x_{2}]\equiv0.
	\end{equation*} 
	Due to the fact that the state variable $x_2(t)$ can be written by $x_2(t)\!\!=\!\!-\lambda_1(t)\hat{c}_1^{T}x_{1}-{\lambda}_2(t)\check{c}_1^{T}x_{1}$, the reduced order system dynamics can be described by
	\begin{equation}
		\dot{x}_{1}={A}_{11}x_{1}+{A}_{12}[-{\lambda}_1(t)\hat{c}_1^{T}x_{1}-{\lambda}_2(t)\check{c}_1^{T}x_{1}].
	\end{equation}
	\indent Next, consider another Lyapunov function $V_1=x_{1}^{T}Px_{1}$, where $P$, $\hat{Q}$ and $\check{Q}$ are positive-define matrices that satisfy $({A}_{11}-{A}_{12}\hat{c}_1^T)^TP+P({A}_{11}-{A}_{12}\hat{c}_1^{T})=-\hat{Q}$ and $({A}_{11}-{A}_{12}\check{c}_1^{T})^TP+P({A}_{11}-{A}_{12}\check{c}_1^{T})=-\check{Q}$. Due to the fact that both $A_{11}-A_{12}\hat{c}_1^T$ and $A_{11}-A_{12}\check{c}_1^T$ are Hurwitz matrices, these two Lyapunov equations can be fulfilled for some positive definite matrices $\hat{Q}$ and $\check{Q}$. Differentiating $V_1$ with respect to time, it yields that 
	\begin{equation}\label{13}
		\begin{aligned}
			\dot{V}_1&={\dot{x}_{1}}^{T}Px_{1}+x_{1}^{T}P\dot{x}_{1} \\
			&={[{A}_{11}x_{1}+{A}_{12}(-{\lambda}_1(t)\hat{c}_1^Tx_{1}-{\lambda}_2(t)\check{c}_1^Tx_1)]}^{T}Px_{1}\\
			&\quad+x_1^TP[{A}_{11}x_{1}+{A}_{12}(-{\lambda}_1(t)\hat{c}_1^{T}x_{1}-{\lambda}_2(t)\check{c}_1^{T}x_{1})].
		\end{aligned}
	\end{equation}
	Since $\lambda_1(t)+\lambda_2(t)\equiv1$, \eqref{13} can be further deduced that
	\begin{equation}
		\begin{aligned}
			\dot{V}_1&=(\lambda_1(t)+\lambda_2(t))x_1^TA_{11}^TPx_1-\lambda_1(t)x_1^T(A_{12}\hat{c}_1^T)^TPx_1\\
			&\quad-\lambda_2(t)x_1^T(A_{12}\check{c}_1^T)^TPx_1+(\lambda_1(t)+\lambda_2(t))x_1^TPA_{11}x_1\\
			&\quad-\lambda_1(t)x_1^TPA_{12}\hat{c}_1^Tx_1-\lambda_2(t)x_1^TPA_{12}\check{c}_1^Tx_1\\
			&=\lambda_1(t)x_1^T[({A}_{11}-{A}_{12}\hat{c}_1^T)^TP+P({A}_{11}-{A}_{12}\hat{c}_1^{T})]x_1\\
			&\quad+\lambda_2(t)x_1^T[({A}_{11}-{A}_{12}\check{c}_1^{T})^TP+P({A}_{11}-{A}_{12}\check{c}_1^{T})]x_1\\
			&=-{\lambda}_1(t)x_1^T\hat{Q}x_1-{\lambda}_2(t)x_1^T\check{Q}x_1.
		\end{aligned}
	\end{equation}
	Hence, derivative of $V_1$ is negative definite. It implies that the state variable $x_1(t)$ of system \eqref{3} eventually converges to the zero vector. On the other hand, using relation $x_2(t)\!=\!-{\lambda}_1(t)\,\hat{c}_1^Tx_1-{\lambda}_2(t)\,\check{c}_1^Tx_1$, one can obtain that
	\begin{equation*}
		\begin{aligned}
			||x(t)||&\le||x_1(t)||+||x_2(t)||\\
			&=||x_1(t)||\,(1+{||\lambda}_1(t)\hat{c}_1^T||+||\lambda_2(t)\check{c}_1^T||)\\
			&\leq ||x_1(t)||(1+||\hat{c}_1^T||+||\check{c}_1^T||).
		\end{aligned}
	\end{equation*}
	Therefore, based on the above analysis, we can conclude that system \eqref{3} is globally robustly asymptotically stable. The proof is completed.
\end{proof}

\begin{remark}\rm
	In Theorem \ref{thm-bound}, we have demonstrated that the global robust asymptotic stability of the system is achieved through the proposed  event-triggered SMC \eqref{7} with hybrid ETM \eqref{5}-\eqref{6}. Recently, robust asymptotic stability was also obtained for event-triggered SMC systems by using a well-designed dynamic event-triggered approach and observer-based sliding mode surface (see \cite{liu2020event}). Note that the event-triggered SMC in \cite{liu2020event} is not kept by classical zero-order holder, and the control laws are designed by additional state observer, which is helpful for enforcing states into the ideal sliding mode. In fact, it may increase the control cost and  computational complexity. In addition, regarding the literature on event-triggered SMC based on zero-order holders, such systems usually cannot reach the ideal sliding mode because of its discretization nature. From a technical perspective, if the states reach the practical sliding mode band instead of the ideal sliding mode, we can only ensure ultimate boundedness rather than asymptotic stability (see \cite{chen2025design,cucuzzella2020event,kumari2020event}).
\end{remark}

In addition, the time sequence ${\left\lbrace{t}_{i}\right\rbrace}$ generated by the proposed hybrid ETM \eqref{5}-\eqref{6} is global admissible, which is shown in the following result.
\begin{theorem}\label{thm2-bound}
	{\rm Consider system \eqref{3} under event-triggered SMC \eqref{7} with hybrid ETM \eqref{5}-\eqref{6}. Then, there exists a unified minimum positive inter-event time $T_i=\textrm{min}(T_{i1},T_{i2}), i\in \mathbb{N}$
		where $T_{i1}$ and $T_{i2}$ satisfy
		\begin{subequations}
			\begin{align}
				T_{i1}&\ge\frac{1}{||A||}\,\textrm{ln}\left(1+\frac{\textrm{sin}\frac{\theta}{2n}}{\rho}||A||-\frac{\frac{\gamma}{\rho}||A||}{\rho\,||x(t_{i})||+\gamma}\right)\label{15a}\\ T_{i2}&\ge\frac{1}{||A||}\,\textrm{ln}\left(1+\frac{\sigma||x(t_i)||+\beta}{||c||\>\rho(||x(t_i)||)+\gamma}\right)\label{15b},
			\end{align}
		\end{subequations}
		and $\rho$ as well as $\gamma$ are defined by 
		\begin{equation}
			\begin{aligned}
				\rho&\triangleq||A-B(c^TB)^{-1}c^{T}A||\\
				\gamma&\triangleq||B(c^TB)^{-1}||\>||K(x(t_i))||+||B||d_\textrm{max}.
			\end{aligned}
		\end{equation}
	}
\end{theorem}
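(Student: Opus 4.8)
The plan is to fix a single inter-event interval $[t_i,t_{i+1})$, reset at $e(t_i)=0$, and derive explicit lower bounds on the time that each of the two sub-triggers \eqref{5} and \eqref{6} needs to fire. Since $t_{i+1}=\min(t_{i+1}^{\prime},t_{i+1}^{\prime\prime})$, the inter-event time is bounded below by the smaller of the two, which is exactly why the claim takes the form $T_i=\min(T_{i1},T_{i2})$. The common workhorse for both bounds is an exponential envelope on $\|e(t)\|$.

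First I would set up the error dynamics. On $[t_i,t_{i+1})$ the control \eqref{7} is held constant, so $\dot{e}(t)=-\dot{x}(t)$; substituting \eqref{3} and \eqref{7} and using $x(t)=x(t_i)-e(t)$ from \eqref{8} gives $\dot{e}(t)=Ae(t)-[A-B(c^TB)^{-1}c^TA]x(t_i)+B(c^TB)^{-1}K(x(t_i))\,\mathrm{sign}\,s(t_i)-Bd(t)$. Taking norms, using $|d(t)|\le d_\textrm{max}$, and invoking the definitions of $\rho$ and $\gamma$ yields the scalar differential inequality $\tfrac{d}{dt}\|e(t)\|\le\|\dot{e}(t)\|\le\|A\|\,\|e(t)\|+\rho\|x(t_i)\|+\gamma$. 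With $e(t_i)=0$, the comparison lemma then produces the envelope $\|e(t)\|\le\frac{\rho\|x(t_i)\|+\gamma}{\|A\|}\big(e^{\|A\|(t-t_i)}-1\big)$, valid until the next trigger.

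Second, for $T_{i2}$ (the size sub-trigger \eqref{6}), I bound $|c^TAe(t)|\le\|c\|\,\|A\|\,\|e(t)\|$, insert the envelope, and observe that \eqref{6} cannot fire while this stays strictly below $\sigma\|x(t_i)\|+\beta$; solving the resulting equality for the earliest admissible firing time gives \eqref{15b}. The step requiring the new idea is $T_{i1}$, where the cosine-type angular condition in \eqref{5} must be translated into a threshold on $\|e(t)\|$. Writing $x(t)=x(t_i)-e(t)$ and applying the law of sines in the triangle with vertices at the origin, $x(t_i)$ and $x(t)$ (whose opposite side has length $\|e(t)\|$), the angle $\alpha(t)$ between $x(t)$ and $x(t_i)$ satisfies $\sin\alpha(t)\le\|e(t)\|/\|x(t_i)\|$. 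Since $\cos$ is decreasing on $[0,\pi]$, the trigger stays silent as long as $\|e(t)\|<\|x(t_i)\|\sin\frac{\theta}{2n}$; inserting the envelope and inverting yields \eqref{15a}.

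Finally I would address positivity and the global conclusion. At $t=t_i$ the envelope vanishes while both thresholds $\sigma\|x(t_i)\|+\beta$ (with $\beta>0$) and $\|x(t_i)\|\sin\frac{\theta}{2n}$ are nonnegative, so the logarithmic arguments exceed one and $T_{i1},T_{i2}>0$ for every fixed $x(t_i)$. For the \emph{unified} lower bound as the state grows unbounded, I would take $\|x(t_i)\|\to\infty$: because \eqref{9} forces $K(x(t_i))$, and hence $\gamma$, to grow affinely in $\|x(t_i)\|$ at the same order as the numerators, the fractions inside both logarithms converge to strictly positive constants, so $\inf_i T_i>0$ even for arbitrarily large states. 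I expect the main obstacle to be twofold: rigorously establishing the sine inequality for the angular trigger (and restricting its use to the regime $\|e(t)\|<\|x(t_i)\|$, together with the monotonicity of $\cos$), and verifying this uniform-in-$\|x(t_i)\|$ positivity despite the state-dependent gain $K(x(t_i))$ appearing in $\gamma$ — it is precisely the matched growth of numerator and denominator that underpins the paper's claim of a positive lower bound when the state is infinite, whereas the envelope estimate and the $T_{i2}$ inversion are routine.
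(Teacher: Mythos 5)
Your proposal follows essentially the same route as the paper's proof: bound $\frac{d}{dt}\|e(t)\|$ by $\|A\|\,\|e(t)\|+\rho\|x(t_i)\|+\gamma$, apply the comparison lemma with $e(t_i)=0$ to get the exponential envelope, and then convert each sub-trigger into a threshold on $\|e(t)\|$ (namely $\sin\frac{\theta}{2n}\|x(t_i)\|$ for \eqref{5} and $\frac{\sigma\|x(t_i)\|+\beta}{\|c\|\,\|A\|}$ for \eqref{6}) that is inverted through the envelope. In fact you supply two details the paper leaves implicit --- the law-of-sines argument justifying $\sin\alpha(t)\le\|e(t)\|/\|x(t_i)\|$ and the matched affine growth of $\gamma$ with $\|x(t_i)\|$ that keeps the logarithmic arguments bounded away from one as the state grows --- so the proposal is correct and, if anything, slightly more complete.
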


\begin{proof}
	Consider set $\Gamma\!=\!\lbrace t\in(t_i,\infty):||e(t)||\!=\!0\rbrace$. For all $t\in[t_i,t_{i+1})\setminus\Gamma$, it implies that
	\begin{equation}
		\begin{aligned}
			\frac{d}{dt}||e(t)||&\le\left|\left|\frac{d}{dt}e(t)\right|\right|=\left|\left|\frac{d}{dt}x(t)\right|\right|\\
			&={\bigg|\bigg|Ax(t)-B(c^TB)^{-1}c^TAx(t_i)\Bigg.}\\ \notag&\quad-{\bigg.B(c^TB)^{-1}K(x(t_i))\,\textrm{sign}s(t_i)+Bd(t)\bigg|\bigg|}.
		\end{aligned}
	\end{equation}
	Using the relation $e(t)\!=\!x(t_i)\!-\!x(t)$ in the above discussion and simplifying further, it yields
	\begin{equation*}
		\begin{aligned}
			\frac{d}{dt}||e(t)||&\le||A||\>||e(t)||+\left|\left|A-B(c^TB)^{-1}c^TA\right|\right|||x(t_i)||\\
			&\quad+||B(c^TB)^{-1}||\>||K(x(t_i))||+||B||d_\textrm{max}\\
			&=||A||\>||e(t)||+\rho||x(t_i)||+\gamma.
		\end{aligned}
	\end{equation*}
	Thus, by using comparison lemma \cite{khalil2002nonlinear} and the initial condition $||e(t_i)||=0$, we have
	\begin{equation}\label{upperboundfore}
		||e(t)||\le\frac{\rho||x(t_i)||+\gamma}{||A||}{\left(e^{||A||(t-t_i)}-1\right)}
	\end{equation}
	for $t\in[t_i,t_{i+1}), i\in \mathbb{N}$.  According to triggering rule \eqref{5}, one can obtain the time $T_{i1}$ needed for $||e(t)||$ to increase from $0$ to $\sin \frac{\theta}{2n} ||x(t_i)||$: 
	\begin{equation}\label{18}
		\textrm{sin}\frac{\theta}{2n}\,||x(t_i)||\le\frac{\rho||x(t_{i})||+\gamma}{||A||}\left({e}^{||A||T_{i1}}-1\right).
	\end{equation}
	Inequality \eqref{18} provides the expression for inter-event time $T_{i1}$, as given in \eqref{15a}.\\
	\indent Next, similar to proving \eqref{15a}, we can prove that \eqref{15b} is satisfied for the inter-event time generated by triggering rule \eqref{6}. For all $t\in[t_i,t_{i+1})\setminus\Gamma$, we can also derive \eqref{upperboundfore}.
	According to triggering rule \eqref{6}, one can get that the time $T_{i2}$ satisfies
	\begin{equation}
		\begin{split}
			\frac{\sigma||x(t_{i})||+\beta}{||c||\>||A||}\le\frac{\rho||x(t_{i})||+\gamma}{||A||}\left({e}^{||A||T_{i2}}-1\right)
		\end{split}
	\end{equation}
	which leads to \eqref{15b}. It is clear that the right-hand sides of both \eqref{15a} and \eqref{15b} possess a positive lower bound even if the system state is infinite. Hence, the proof is completed.
\end{proof}
\begin{remark}
	{\rm In this study, the inter-event time can be guaranteed to have a unified lower bound even if the state is relatively large and even infinite. As exemplified by \cite{bandyopadhyay2018event}, while a positive lower bound on the inter-event time can often be established when the system state resides within a bounded region, achieving a uniform lower bound for all possible system states, particularly when the state is infinite, remains an unresolved issue.
Further, the system states are proved to be robustly asymptotically stable instead of just converging to a certain region of the equilibrium point, which is different from the previous results based on practical sliding mode band (see \cite{7448882,BEHERA201861,jiang2024periodic}). On the other hand, we can observe from \eqref{15a} that the global robust asymptotic stability can be achieved at the potential cost of high-frequency updating of control signals when the system states are close to the equilibrium point. In next section, we are going to address this potential cost by combining ideal sliding mode cone and practical sliding mode band.} 
\end{remark}\\
\indent \!\!In Theorem \ref{thm-bound}, one may observe that the control law \eqref{7} is designed to overcome the difficulty that the positive lower bound of inter-event time may not be satisfied before the system trajectories enter the ideal sliding mode cone (see \cite{7448882}). Actually, after the system trajectories enter the ideal sliding mode cone, the control law \eqref{7} can be replaced by the classical event-triggered SMC:
\begin{equation}\label{21}
	u=-(c^TB)^{-1}(c^TAx(t_i)+K\textrm{sign}s(t_i)).
\end{equation}
Once the system state enters the ideal sliding mode cone, its dynamic characteristics are constrained within the cone, and it only needs to maintain the system state within the cone to converge to the zero vector (as proven by Theorem \ref{thm-bound}). Continuing to use the control law \eqref{7} at this point will increase unnecessary computational burden and control costs.
Hence, we can further obtain the following results.
\begin{theorem}\label{thm3-bound}
	{\rm Consider system \eqref{3}. When the system state is outside the ideal sliding mode cone $D$, apply triggering rule \eqref{6} and control law \eqref{7} to drive the state toward $D$. Once the state enters $D$, the control inputs switch to law \eqref{21}.  The system trajectories are therefore guaranteed to stay within the ideal sliding mode cone by control law \eqref{21} with the triggering rule \eqref{5} and global robust asymptotic stability can be achieved.}
\end{theorem}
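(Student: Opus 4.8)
The plan is to reuse the two-part architecture of the proof of Theorem \ref{thm-bound}, separating a reaching phase (outside the cone) from an invariance-plus-convergence phase (inside the cone), and to verify that replacing the state-dependent law \eqref{7} by the constant-gain law \eqref{21} inside $D$ destroys neither property. First I would treat the phase in which $x(t)\notin D$. Here the dynamics are governed by \eqref{7} together with triggering rule \eqref{6}, so the reaching estimate already derived in Theorem \ref{thm-bound} applies verbatim: with $V=\frac{1}{2}s^2$ and the gain condition \eqref{9}, one obtains $\dot V\le-\xi|s|$ whenever $\textrm{sign}\,s(t_i)=\textrm{sign}\,s(t)$, so $s\to 0$ in finite time and the trajectory reaches the central surface $S$. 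Since $S\subset D$, the state enters the cone in finite time, at which instant the controller switches to \eqref{21}.

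Second, I would establish forward invariance of $D$ under the switched law \eqref{21} with rule \eqref{5}. The argument is the same geometric one used in the ``remaining in the cone'' paragraph of Theorem \ref{thm-bound}, with attractivity toward $S$ now supplied by the constant-gain law. Recomputing the Lyapunov derivative for \eqref{21} gives $\dot s=-c^TAe(t)-K\,\textrm{sign}\,s(t_i)+c^TBd(t)$, whence on a sign-consistent interval $\dot V\le-|s|\big(K-|c^TAe(t)|-|c^TB|d_{\textrm{max}}\big)$; choosing the constant $K$ large enough that this bracket is positive keeps $s$ attracted to $S$, the axis of the cone. The worst case for escape is $s(t_i)=0$, i.e. a trigger occurring on $S$ itself; by rule \eqref{5} the angular deviation of $x(t)$ from $x(t_i)$ is at most $\theta/(2n)\le\theta/2$ before the next event, which is no larger than the half-angle of the cone, so the trajectory cannot cross $\partial D$. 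Hence $D$ is invariant.

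Third, once confinement to $D$ is secured, I would invoke the reduced-order argument of Theorem \ref{thm-bound} unchanged, since that argument uses only membership in $D$ and not the particular control law. Writing $x_2=-\lambda_1\hat c_1^Tx_1-\lambda_2\check c_1^Tx_1$ for the convex multipliers with $\lambda_1+\lambda_2\equiv1$ guaranteed by the cone representation, the Lyapunov function $V_1=x_1^TPx_1$ satisfies $\dot V_1=-\lambda_1x_1^T\hat Q x_1-\lambda_2 x_1^T\check Q x_1<0$, so $x_1\to0$; the bound $\|x\|\le\|x_1\|(1+\|\hat c_1^T\|+\|\check c_1^T\|)$ then yields $x\to0$ and global robust asymptotic stability.

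The main obstacle I anticipate is the treatment of the switching itself rather than either phase in isolation. Two points need care: well-posedness of the closed loop at the interface $\partial D$, and the fact that rule \eqref{5} constrains only the direction of the error, not the magnitude $|c^TAe(t)|$ entering the invariance estimate. For the former, I would argue that because \eqref{21} drives $s$ back toward the axis $S$, which lies deep inside the cone, the state moves away from $\partial D$ after each reset, so the switch is activated only finitely often and no boundary chattering occurs. For the latter, the resolution is that it is the geometric half-angle bound, not a direct bound on $|c^TAe(t)|$, that prevents escape from $D$; the role of the constant gain $K$ is merely to guarantee attractivity toward the axis, available as long as $K$ dominates $|c^TB|d_{\textrm{max}}$ together with the error term admitted within one inter-event interval. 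Making this interplay precise, and confirming that a single constant $K$ suffices throughout the cone despite the state magnitude possibly being large on entry, is the delicate step.
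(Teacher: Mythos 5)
Your proposal follows essentially the same route as the paper: the paper's proof is a three-step argument (Step (i): reuse the reaching estimate $\dot V\le-\xi|s|$ from Theorem \ref{thm-bound} under law \eqref{7} with rule \eqref{6}; Step (ii): once inside $D$, switch to \eqref{21} and use the angular constraint of rule \eqref{5} with the worst case $s(t_i)=0$ to keep the state in the cone; Step (iii): invoke the reduced-order Lyapunov argument with $V_1=x_1^TPx_1$ verbatim), which is exactly your decomposition. Your elaboration of the invariance step — recomputing $\dot V$ for the constant-gain law and flagging that rule \eqref{5} bounds only the direction of the error while the geometric half-angle bound is what actually prevents escape from $\partial D$ — is more detailed than the paper's own (quite terse) Step (ii), but it is a refinement of, not a departure from, the same argument.
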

\begin{figure}[t]
	\centering
	\includegraphics[width=3.9in]{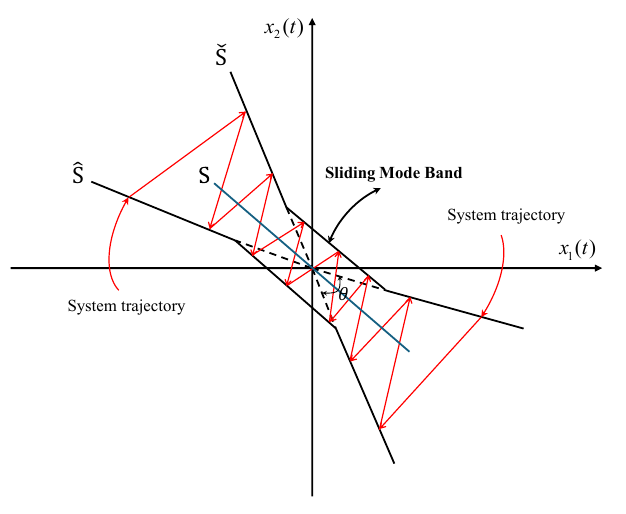}
	\caption{Illustration of practical sliding mode cone.}\label{Fig. 3}
\end{figure}
\begin{proof}
	The proof to this theorem shall be shown in the following three steps.\\
	\indent Step (i): By referring to the proof to Theorem \ref{thm-bound}, it can be obtained that under event-triggered SMC \eqref{7} with triggering rule \eqref{6}, it leads to
	\begin{equation*}
		\dot{V}\le-\xi|s(t)|
	\end{equation*} where $\xi>0$. The derivative of Lyapunov function $V$ is negative, so the control law \eqref{7} will drive the system towards the ideal sliding mode cone.\\
	\indent Step (ii): When the system state reaches the ideal sliding mode cone, control law \eqref{7} switches to \eqref{21}, and the triggering rule \eqref{5} begins to play a dominate role. Actually, once the state direction is significantly shifted, controller \eqref{21} will be updated to keep the system state in the ideal sliding mode cone.\\
	\indent Step (iii): Similarly, by referring the second half of proof to Theorem \ref{thm-bound}, we can observe that the addressed system can achieve global robust asymptotic stability.
\end{proof}
\begin{figure}[!t]
	\centering
	\includegraphics[width=4.2in,height=2.6in]{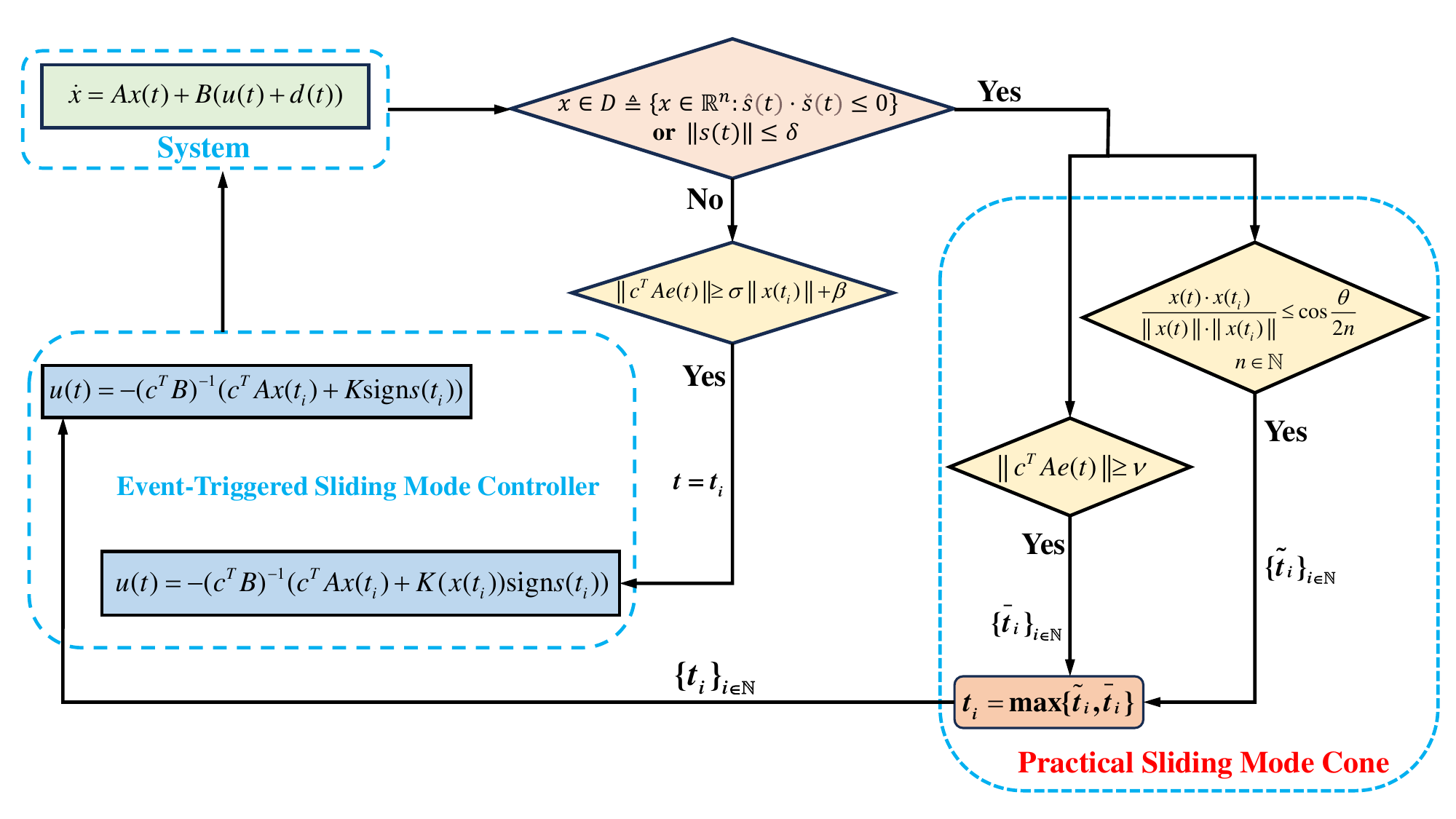}
	\caption{Block diagram of the event-triggered SMC based on the practical sliding mode cone.}\label{Fig. 4}
\end{figure}

Similarly, we can present the following result for the global admission of triggering time sequence.
\begin{theorem}\label{thm4-bound}
	{\rm Consider system \eqref{3} under the event-triggered SMC with hybrid ETM described by Theorem \ref{thm3-bound}. Then, the inter-event time
		can possess a unified positive lower bound even if the system states are infinite.}
\end{theorem}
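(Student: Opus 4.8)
The plan is to mirror the argument of Theorem \ref{thm2-bound}, applying it separately to the two operating regimes of the switched scheme prescribed in Theorem \ref{thm3-bound}. Outside the cone $D$ the closed loop runs control law \eqref{7} under triggering rule \eqref{6}, which is precisely the configuration already analyzed in Theorem \ref{thm2-bound}; inside $D$ it runs control law \eqref{21} under triggering rule \eqref{5}. I would first dispose of the outside-$D$ regime by invoking Theorem \ref{thm2-bound} directly: every inter-event interval generated there is bounded below by $T_{i2}$ in \eqref{15b}. Although the $\gamma$ appearing in $T_{i2}$ is state-dependent through $K(x(t_i))$, condition \eqref{9} forces $K(x(t_i))$ to grow at most affinely in $\|x(t_i)\|$, so $\gamma$ is affine in $\|x(t_i)\|$; hence the argument of the logarithm in \eqref{15b} tends to a constant strictly larger than one as $\|x(t_i)\|\to\infty$, and $T_{i2}$ stays bounded away from zero in the infinite-state limit. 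No new work is needed for this regime.

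The substance lies in the inside-$D$ regime. Here I would repeat the comparison-lemma estimate from the proof of Theorem \ref{thm2-bound}. The only structural change between \eqref{7} and \eqref{21} is that the state-dependent gain $K(x(t_i))$ is replaced by a constant gain $K$, so the differential inequality becomes
\[
\frac{d}{dt}\|e(t)\|\le\|A\|\,\|e(t)\|+\rho\|x(t_i)\|+\gamma',
\]
with the \emph{constant} $\gamma'\triangleq\|B(c^TB)^{-1}\|\,|K|+\|B\|d_\textrm{max}$ in place of $\gamma$ and $\rho$ unchanged. Applying the comparison lemma with $\|e(t_i)\|=0$ reproduces the exponential envelope \eqref{upperboundfore} with $\gamma'$ substituted for $\gamma$.

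Next I would convert triggering rule \eqref{5} into an error threshold. The rule fires when the angle between $x(t)$ and $x(t_i)$ first reaches $\tfrac{\theta}{2n}$; since the perpendicular distance from the point $x(t_i)$ to the line through the origin and $x(t)$ equals $\|x(t_i)\|\sin\tfrac{\theta}{2n}$, and $x(t)$ is itself a point on that line, one has $\|e(t)\|=\|x(t_i)-x(t)\|\ge\|x(t_i)\|\sin\tfrac{\theta}{2n}$ at the firing instant. Substituting this threshold into the envelope and solving for the elapsed time yields a bound of the form \eqref{15a},
\[
\tilde T_{i1}\ge\frac{1}{\|A\|}\ln\!\left(1+\frac{\sin\frac{\theta}{2n}\,\|A\|\,\|x(t_i)\|}{\rho\|x(t_i)\|+\gamma'}\right).
\]
The decisive observation, and the reason the estimate survives the infinite-state limit, is that the numerator and denominator of the logarithm's argument are both affine in $\|x(t_i)\|$ with positive leading coefficients; hence as $\|x(t_i)\|\to\infty$ the argument tends to $1+\tfrac{\sin(\theta/2n)\|A\|}{\rho}>1$, and $\tilde T_{i1}$ tends to the strictly positive constant $\tfrac{1}{\|A\|}\ln\!\left(1+\tfrac{\sin(\theta/2n)\|A\|}{\rho}\right)$.

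Finally, I would set $T_i=\min(T_{i2},\tilde T_{i1})$ across the two regimes, each factor being bounded away from zero as $\|x(t_i)\|\to\infty$, which delivers the claimed unified positive lower bound even for infinite states. The main obstacle I anticipate is not any single estimate but the bookkeeping at the switching instant: I must confirm that replacing both the gain and the active triggering rule at the moment the trajectory enters $D$ cannot create an interval shorter than both phase bounds. This follows because the error is reset to $\|e(t_i)\|=0$ at every triggering instant and the envelope \eqref{upperboundfore} restarts from zero regardless of which control law is in force, so the interval straddling entry into $D$ is governed by whichever of the two phase bounds applies to its initial control law. (As noted in the remark after Theorem \ref{thm-bound}, these bounds do degrade as $\|x(t_i)\|\to 0$ near the origin, which is exactly the near-equilibrium cost addressed by the practical sliding mode cone in the next section; the present claim concerns only the infinite-state regime.)
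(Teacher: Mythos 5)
Your proposal is correct and follows essentially the same route as the paper, whose own ``proof'' of this theorem is simply a one-line deferral to the argument of Theorem \ref{thm2-bound}; you have filled in exactly the details that deferral presupposes (re-running the comparison-lemma envelope in each regime, converting rule \eqref{5} into the error threshold $\|e(t)\|\ge\|x(t_i)\|\sin\tfrac{\theta}{2n}$, and checking the $\|x(t_i)\|\to\infty$ limit of the logarithmic bounds). Your treatment is in fact slightly more careful than the paper's, since you make explicit the (needed, but unstated) assumption that $K(x(t_i))$ grows at most affinely in $\|x(t_i)\|$ and you address the bookkeeping at the switching instant.
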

\begin{proof}
As the proof of this theorem is similar to that of Theorem \ref{thm2-bound}, it is not repeated here.
\end{proof}\\

\section{Extension}
\indent Note that the concept of ideal sliding mode cone can be applied to guarantee that robust asymptotic stability is achieved and the inter-event time has a unified positive lower bound when the initial states of system are relatively large and even infinite. However, it may bring high-frequent updating of control inputs when the states are near the equilibrium point, which is the price paid for suppressing the external disturbances. It is known that the practical sliding mode band
can be regarded as a extension of ideal sliding mode (see Definition \ref{PSMB} and  \cite{behera2021survey,bandyopadhyay2018event,7048489}).  To relax the high-frequent updating near the origin and combine the advantages of both ideal sliding mode cone and practical sliding mode band, this section shall extend the concept of ideal sliding mode cone to the practical one. Specifically, the definition of practical sliding mode cone is given as follows: 
\begin{definition}{\rm (Practical Sliding Mode Cone):}
	{\rm System \eqref{3} is said to be in the practical conical sliding mode if there exists a $t_1^\star\geq t_0$ such that the system state reaches and remains within the following practical cone region $$D^\star\triangleq\left\lbrace x\in\mathbb{R}^n:\hat{s}(t)\check{s}(t)\le0~{\rm or}~\|s(t)\|\leq\delta \right\rbrace$$ for all $t\geq t_1^\star$.} 
\end{definition}

The practical sliding mode cone is illustrated in Fig.~\ref{Fig. 3}, showing the boundaries within which the system state is maintained.  Considering $n\in \mathbb{Z}_+$, the hybrid ETM is designed for practical sliding mode cone as follows:
\begin{equation}\label{22}
	\tilde{t}_{i+1}=\!\textrm{inf}\left\lbrace t:\!t>\tilde{t}_i,\frac{x^{T}(t)\cdot x(t_i)}{||x(t)||\> ||x({t}_{i})||}\le \textrm{cos}\frac{\theta}{2n}\right\rbrace
\end{equation}
\begin{equation}\label{23}
	\overline{t}_{i+1}=\!\textrm{inf}\left\lbrace
	t:\!t>\overline{t}_i,||c^TAe(t)||\ge\nu\right\rbrace
\end{equation} for $\nu\in(0,\infty)$, where $\theta$ represents the magnitude of angle between the sliding manifolds $\hat{S}$ and $\check{S}$. Based on the above hybrid ETM, the triggering instant is generated by $t_{i+1}=\textrm{max}(\tilde{t}_{i+1},\overline{t}_{i+1}), i\in \mathbb{N}$.\\
\indent The control algorithm based on the practical sliding mode cone for system \eqref{3} is shown in Fig. \ref{Fig. 4}. To be specific,\\
\indent 1) When the state trajectory of system is outside the ideal sliding mode cone, we consider the control laws \eqref{7} with triggering rule \eqref{6}. \\
\indent 2) After the state trajectory of system enters the practical sliding mode cone, the control laws will switch to \eqref{21} with ETM \eqref{22}-\eqref{23}.

Based on the above discussions, we can obtain the following result:
\begin{theorem}\label{thm5-bound}
	{\rm The control algorithm described by Fig. \ref{Fig. 4} can bring system \eqref{3} to the practical sliding mode cone. Specially, the system remains ultimately bounded in the following region:
		\begin{equation*}
			\Omega\triangleq\left\{x\in\mathbb{R}^{n}:||x||\le\frac{\nu}{||A||}+\frac{2\nu(1+||c_1||)||\tilde{P}A_{12}||}{\lambda_{\rm min}\{\tilde{Q}\}||A||}\right\}.
	\end{equation*}}
\end{theorem}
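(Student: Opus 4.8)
The plan is to split the analysis into a reaching phase and an ultimate-boundedness phase, in direct analogy with Theorems \ref{thm-bound}--\ref{thm3-bound}. For the reaching phase, while the state lies outside the ideal sliding mode cone the algorithm runs control law \eqref{7} under triggering rule \eqref{6}, so the estimate $\dot{V}\le-\xi|s(t)|$ obtained in Step (i) of the proof of Theorem \ref{thm3-bound} applies verbatim and drives the trajectory into $D^\star$ in finite time. It then remains to show that, once the control switches to \eqref{21} under the ETM \eqref{22}--\eqref{23}, the closed loop is ultimately bounded in $\Omega$. Throughout I would work with the central surface $S$ and the matrices $\tilde{P},\tilde{Q}>0$ solving $(A_{11}-A_{12}c_1^T)^T\tilde{P}+\tilde{P}(A_{11}-A_{12}c_1^T)=-\tilde{Q}$, which exist because $A_{11}-A_{12}c_1^T$ is Hurwitz.

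The first, and technically delicate, step is to convert the sampling threshold $\nu$ of rule \eqref{23} into an ultimate bound on the sliding variable, namely $\|s(t)\|\le\delta:=\nu/\|A\|$. Here I would differentiate $V=\tfrac12 s^2$ along \eqref{21}, use $\dot{s}=-c^TAe(t)-K\,\textrm{sign}\,s(t_i)+c^TBd(t)$ together with the bound on $\|c^TAe(t)\|$ enforced between triggers by \eqref{23}, and control the inter-event excursion of $s$ by a comparison-lemma estimate of the same type as \eqref{upperboundfore} in Theorem \ref{thm2-bound}; this yields the band half-width $\nu/\|A\|$ and shows that $\|s\|$ cannot leave it once inside. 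This is where I expect the main obstacle: since the triggering instant is now $\max(\tilde{t}_{i+1},\overline{t}_{i+1})$ rather than a minimum, $\|c^TAe(t)\|$ may transiently exceed $\nu$ before an update is issued, so the band estimate must be argued carefully (exploiting that whenever the direction condition \eqref{22} is the binding one the state already lies in the conical part of $D^\star$, and when \eqref{23} binds it lies in the band part).

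With the band bound in hand I would analyze the reduced-order motion inside $D^\star$. Writing $x_2=s-c_1^Tx_1$ gives the forced reduced dynamics $\dot{x}_1=(A_{11}-A_{12}c_1^T)x_1+A_{12}s$, and differentiating $V_1=x_1^T\tilde{P}x_1$ yields
\begin{equation*}
\dot{V}_1=-x_1^T\tilde{Q}x_1+2x_1^T\tilde{P}A_{12}s\le-\lambda_{\rm min}\{\tilde{Q}\}\|x_1\|^2+2\|\tilde{P}A_{12}\|\,\|x_1\|\,\|s\|.
\end{equation*}
Since $\|s\|\le\nu/\|A\|$, the right-hand side is negative whenever $\|x_1\|>\tfrac{2\|\tilde{P}A_{12}\|\nu}{\lambda_{\rm min}\{\tilde{Q}\}\|A\|}$, so $x_1$ is ultimately confined to this ball.

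Finally I would reassemble the full state bound. Using $\|x\|\le\|x_1\|+\|x_2\|$ with $\|x_2\|\le\|s\|+\|c_1\|\,\|x_1\|$, hence $\|x\|\le(1+\|c_1\|)\|x_1\|+\|s\|$, and substituting the two bounds obtained above gives
\begin{equation*}
\|x\|\le\frac{\nu}{\|A\|}+\frac{2\nu(1+\|c_1\|)\|\tilde{P}A_{12}\|}{\lambda_{\rm min}\{\tilde{Q}\}\|A\|},
\end{equation*}
which is precisely the region $\Omega$. The only genuinely new ingredient beyond Theorems \ref{thm-bound}--\ref{thm2-bound} is the band-width computation of the second step; the reaching argument and the reduced-order estimate are routine adaptations of the earlier proofs.
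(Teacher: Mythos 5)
Your proposal is correct and follows essentially the same route as the paper's proof: a reaching phase borrowed from Theorem \ref{thm3-bound}, a band estimate $\|s(t)\|\le\nu/\|A\|$ extracted from rule \eqref{23} (worst case $s(t_i)=0$), the reduced-order Lyapunov analysis with $\tilde{P},\tilde{Q}$ giving $\|x_1\|\le 2\nu\|\tilde{P}A_{12}\|/(\lambda_{\rm min}\{\tilde{Q}\}\|A\|)$, and the reassembly $\|x\|\le(1+\|c_1\|)\|x_1\|+\|s\|$ yielding $\Omega$. The one step you rightly flag as delicate --- converting the threshold $\nu$ on $\|c^TAe(t)\|$ into the band half-width $\nu/\|A\|$ under the $\max$-type triggering --- is precisely where the paper is also terse, as it simply asserts $|s(t)-s(t_i)|\le\|c\|\,\|e(t)\|\le\nu/\|A\|$ in \eqref{24} without further justification.
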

\begin{proof}
	First, we can observe that the state trajectory of system can be attracted into the ideal sliding mode cone under SMC \eqref{7} with triggering rule \eqref{6}. Moreover, the system state can maintain in practical sliding mode cone subsequently. Actually, the proof of this case is similar to that of Theorem \ref{thm3-bound}, so it is omitted here.\\
	\indent Second, we shall demonstrate that the system is ultimately bounded after arriving in the practical sliding mode cone. Especially, if system state is kept in region $D\triangleq\left\lbrace x\in\mathbb{R}^n:\hat{s}(t)\check{s}(t)\le0\right\rbrace$ all the time, we can conclude the asymptotic stability by using Theorem \ref{thm-bound}. Recall that the controller is changed to \eqref{21} with ETM \eqref{22}-\eqref{23} when the system state stay in the practical sliding mode cone. Then, from the other case that system state retain in region $\|s(t)\|\leq\delta$, it follows that 
	\begin{equation}\label{24}
		\begin{aligned}
			|s(t_i)-s(t)|&=|c^Tx(t_i)-c^Tx(t)|\\
			&\le||c||\>||e(t)||\\
			&\le\frac{\nu}{||A||}.
		\end{aligned}
	\end{equation}
	This gives the maximum deviation of sliding variable from its immediate sampled value. Then, the maximum value of the band can be obtained for the case $s(t_i)=0$.\\
	\indent To prove the uniform ultimate boundedness, consider the Lyapunov function $\tilde{V}_1(x_1)=x_1^T\tilde{P}x_1$ with positive definite matrix $\tilde{P}\in\mathbb{R}^{(n-1)\times(n-1)}$ such that $(A_{11}-A_{12}c_1^T)^T\tilde{P}+\tilde{P}(A_{11}-A_{12}c_1^T)=-\tilde{Q}$, for any positive definite matrix $\tilde{Q}\in\mathbb{R}^{(n-1)\times(n-1)}$. Differentiating $\tilde{V}_1$ with respect to time, it yields that
	\begin{equation}
		\begin{aligned}	
			\dot{\tilde{V}}_1(x_1)&=\dot{x}_1^T\tilde{P}x_1+x_1^T\tilde{P}\dot{x}_1\\
			&=x_1^T[(A_{11}-A_{12}c_1^T)^T\tilde{P}+\tilde{P}(A_{11}-A_{12}c_1^T)]x_1\\
			&\quad+2x_1^T\tilde{P}A_{12}s(t)\\
			&\le-x_1^T\tilde{Q}x_1+2||x_1||\>||\tilde{P}A_{12}||\>||s(t)||
		\end{aligned}
	\end{equation}
	where $A_{11}-A_{12}c_1^T$ is designed as a Hurwitz matrix. Since sliding variable $s(t)$ satisfies \eqref{24}, one can obtain that
	\begin{equation*}
		\dot{\tilde{V}}_1(x_1)\le-\lambda_{\rm min}\{\tilde{Q}\}||x_1||^2+\frac{2\nu||x_1||\>||\tilde{P}A_{12}||}{||A||},
	\end{equation*} 
	where $\lambda_{\rm min}\{\tilde{Q}\}$ denotes the minimum eigenvalue of $\tilde{Q}$.
	Hence, $x_1(t)$ remains bounded, namely, $||x_1||\le2\nu\frac{||\tilde{P}A_{12}||}{\lambda_{\rm min}\{\tilde{Q}\}||A||}$. Since $||s(t)||$ and $x_1(t)$ is bounded, we can acquire the boundedness of $x_2(t)$. To be specific,
	\begin{equation*}
		\begin{aligned}
			||x_2(t)||&\le||s(t)-c_1^Tx_1(t)||\\
			&\leq||s(t)||+||c_1||\>||x_1(t)||\\
			&\leq \frac{\nu}{||A||}+\frac{2\nu||c_1||\>||\tilde{P}A_{12}||}{\lambda_{\rm min}\{\tilde{Q}\}||A||}.
		\end{aligned}
	\end{equation*}
	Consequently, the system is uniformly ultimately bounded within the region $\Omega$. The proof is completed. 
\end{proof}
\begin{remark}
	{\rm In Section \uppercase\expandafter{3}, we have demonstrated that the trajectory of system \eqref{3} can always remain inside the ideal sliding mode cone and eventually converge to the equilibrium point under event-triggered SMC \eqref{7} with the proposed ETM, but we may not ensure that there is a positive lower bound on the inter-event time of system \eqref{3} near the equilibrium point. As a result, we propose the concept of practical sliding mode cone, which contains the advantages of both practical sliding mode band and ideal sliding mode cone. \\
		\indent Based on the practical sliding mode cone, the trajectory of the system remains bounded within a practical sliding mode band near the equilibrium point. Nevertheless, when the initial value is relatively large or even infinite, the lower bound of the inter-event time is still guaranteed by the sliding mode cone method. In other words, if only the practical sliding mode band is used without applying the practical sliding mode cone, then the global ETM cannot be derived. Next, we shall demonstrate that system \eqref{3} under event-triggered SMC \eqref{21} with ETM \eqref{22}-\eqref{23} can ensure that the inter-event time can globally possess a unified lower bound without high-frequency triggering near the origin.\\
		\indent Additionally, Fig.~\ref{Fig. 4} presents a block diagram clearly depicting the interplay between system states, control inputs, and the ETM. This figure illustrates how control updates are triggered based on the current system state and the predefined practical sliding mode cone, effectively reducing the frequency of control inputs.}
\end{remark}
\begin{theorem}\label{thm6-bound}
	{\rm Consider system \eqref{3} under the event-triggered SMC strategy described by Fig. \ref{Fig. 4}. Let ${\left\lbrace{t}_{i}\right\rbrace}_{i=0}^{\infty}$ be the sequence of triggering instants generated by ETM \eqref{22}-\eqref{23}, then there exists a maximum positive inter-event time $T_i=\textrm{max}(\tilde{T}_{i1},\overline{T}_{i2}), i\in \mathbb{N}$
		where $\tilde{T}_{i1}$ and $\overline{T}_{i2}$ satisfy 
		\begin{subequations}
			\begin{align}
				\tilde{T}_{i1}&\ge\frac{1}{||A||}\,\textrm{ln}\left(1+\frac{\textrm{sin}\frac{\theta}{2n}}{\rho}||A||-\frac{\frac{\gamma}{\rho}||A||}{\rho\,||x(t_{i})||+\gamma}\right)\label{27a}\\
				\overline{T}_{i2}&\ge\frac{1}{||A||}\,\textrm{ln}\left(1+\frac{\nu}{||c||\>\rho||x(t_i)||+\mu}\right)\label{27b},
			\end{align}
		\end{subequations} 
		where $\rho$ and $\gamma$ are the same as those in Theorem \ref{thm2-bound}, and $\mu$ is given by 
		\begin{equation}
			\begin{aligned}
				\mu&\triangleq||B(c^TB)^{-1}||K+||B||d_\textrm{max}.
			\end{aligned}
	\end{equation}}
\end{theorem}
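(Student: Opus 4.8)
The plan is to recycle the comparison-lemma machinery behind Theorem~\ref{thm2-bound}, now specialized to an inter-event interval $[t_i,t_{i+1})$ that is governed by control law \eqref{21} together with the hybrid ETM \eqref{22}-\eqref{23}, i.e.\ the phase after the state has entered the practical sliding mode cone in Fig.~\ref{Fig. 4}. First I would bound the growth of the error norm. Writing $e(t)=x(t_i)-x(t)$, differentiating $\|e(t)\|$ wherever $e(t)\neq 0$, and substituting $\dot x(t)$ from \eqref{3} under the constant-gain law \eqref{21}, I would obtain
\begin{equation*}
\frac{d}{dt}\|e(t)\|\le\|A\|\,\|e(t)\|+\rho\|x(t_i)\|+\mu,
\end{equation*}
where $\rho$ is exactly as in Theorem~\ref{thm2-bound} and the constant gain $K$ of \eqref{21} produces the term $\mu$ in place of $\gamma$. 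Since $\|e(t_i)\|=0$, the comparison lemma \cite{khalil2002nonlinear} then yields the explicit envelope
\begin{equation*}
\|e(t)\|\le\frac{\rho\|x(t_i)\|+\mu}{\|A\|}\left(e^{\|A\|(t-t_i)}-1\right),\qquad t\in[t_i,t_{i+1}).
\end{equation*}

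Next I would turn each triggering rule into a waiting-time estimate. For the deviation rule \eqref{23}, the inequality $\|c^TAe(t)\|\le\|c\|\,\|A\|\,\|e(t)\|$ shows the threshold $\|c^TAe(t)\|\ge\nu$ cannot fire before $\|e(t)\|$ reaches $\nu/(\|c\|\,\|A\|)$; inserting this value into the envelope and solving for the elapsed time reproduces \eqref{27b}. For the direction rule \eqref{22} I would argue exactly as for \eqref{15a} in Theorem~\ref{thm2-bound}: the cosine condition is equivalent to the angle between $x(t)$ and $x(t_i)$ reaching $\tfrac{\theta}{2n}$, and since $\|x(t_i)\|\sin\tfrac{\theta}{2n}$ is the minimum Euclidean distance from $x(t_i)$ to any ray through the origin at angle $\tfrac{\theta}{2n}$, the rule cannot fire before $\|e(t)\|$ attains $\sin\tfrac{\theta}{2n}\,\|x(t_i)\|$. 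Feeding this threshold into the envelope and solving reproduces the bound \eqref{27a}.

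Finally, because the triggering instant is $t_{i+1}=\max(\tilde t_{i+1},\overline t_{i+1})$, a new event occurs only after both candidate times have elapsed, so $T_i=\max(\tilde T_{i1},\overline T_{i2})$ inherits both lower bounds simultaneously. The conclusion I would emphasize is the uniform positivity of this maximum across the whole state space: as $\|x(t_i)\|\to\infty$ the correction term of \eqref{27a} vanishes and $\tilde T_{i1}$ stays above the positive constant $\frac{1}{\|A\|}\ln\!\big(1+\tfrac{\sin(\theta/2n)}{\rho}\|A\|\big)$, whereas as $\|x(t_i)\|\to 0$ the bound \eqref{27b} for $\overline T_{i2}$ tends to the positive constant $\frac{1}{\|A\|}\ln\!\big(1+\tfrac{\nu}{\mu}\big)$. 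Taking the maximum therefore excludes Zeno behaviour for arbitrarily large states while, crucially, avoiding the high-frequency triggering near the origin that the pure ideal-cone ETM of Section~3 suffered.

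I expect the main obstacle to be this last step rather than the envelope, which is essentially routine. Each individual bound degenerates to zero at one end of the range of $\|x(t_i)\|$ --- \eqref{27b} as the state grows and \eqref{27a} (in principle) as it shrinks --- so the argument succeeds only because the $\max$ structure of the ETM lets the two regimes cover one another. I would therefore take care to justify the limiting/monotonicity behaviour of the two right-hand sides as functions of $\|x(t_i)\|$, and to verify that the geometric minimum-distance claim underlying \eqref{27a} holds for every $x(t_i)\neq 0$, since it is precisely this claim that links the angular condition \eqref{22} to the scalar envelope on $\|e(t)\|$.
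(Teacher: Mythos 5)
Your proposal is correct and follows essentially the same route as the paper: the comparison-lemma envelope for $\|e(t)\|$ with $\|e(t_i)\|=0$, conversion of each triggering rule into a minimum waiting time via the thresholds $\sin\tfrac{\theta}{2n}\|x(t_i)\|$ and $\nu/(\|c\|\,\|A\|)$, and the observation that the $\max$ structure of the ETM yields a globally uniform positive bound. If anything you are more careful than the paper on two points: you make explicit the geometric fact linking the angular rule \eqref{22} to the scalar bound on $\|e(t)\|$, and you correctly identify that each of \eqref{27a} and \eqref{27b} degenerates at one end of the range of $\|x(t_i)\|$ so that only their maximum is uniformly positive (the paper asserts that both right-hand sides are positively bounded below, which is not literally true); the only cosmetic discrepancy is that your single $\mu$-envelope under law \eqref{21} would place $\mu$ rather than $\gamma$ in \eqref{27a}, whereas the paper reuses the law-\eqref{7} envelope from Theorem \ref{thm2-bound} for that rule.
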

\begin{proof}
	At first, by referring to the proof of Theorem \ref{thm2-bound}, one can acquire that under the event-triggered SMC \eqref{7} with ETM \eqref{22}, it leads to
	\begin{equation}
		||e(t)||\le\frac{\rho||x(t_i)||+\gamma}{||A||}{\left(e^{||A||(t-t_i)}-1\right)}
	\end{equation}
	for $t\in[t_i,t_{i+1}), i\in \mathbb{N}$. By using the triggering rule \eqref{22}, one can get that 
	\begin{equation}\label{30}
		\textrm{sin}\frac{\theta}{2n}\,||x(t_i)||\le\frac{\rho||x(t_{i})||+\gamma}{||A||}\left({e}^{||A||\tilde{T}_{i1}}-1\right).
	\end{equation} 
	The lower bound for $\tilde{T}_{i1}$, which is given in \eqref{27a}, can be obtained from \eqref{30}.
	\indent Next, consider the set $\Gamma\!=\!\lbrace t\in(t_i,\infty):||e(t)||\!=\!0\rbrace$.
	For all $t\in[t_i,t_{i+1})\setminus\Gamma$, taking the time derivative of $e(t)$ 
	\begin{equation}\label{31}
		\begin{split}
			\frac{d}{dt}||e(t)||&\le\left|\left|\frac{d}{dt}e(t)\right|\right|=\left|\left|\frac{d}{dt}x(t)\right|\right|\\
			&={\bigg|\bigg|Ax(t_i)-Ae(t)-B(c^TB)^{-1}c^TAx(t_i)\Bigg.}\\ \notag&\quad-{\bigg.B(c^TB)^{-1}K\textrm{sign}s(t_i)+Bd(t)\bigg|\bigg|}\\
			&\le||A||\>||e(t)||+\left|\left|\left(A-B(c^TB)^{-1}c^TA\right)x(t_i)\right|\right|\\
			&\quad+||B(c^TB)^{-1}K||+||B||d_\textrm{max}\\
			&\leq||A||\>||e(t)||+\rho(||x(t_i)||)+\mu.
		\end{split}
	\end{equation}
	Using comparison lemma \cite{khalil2002nonlinear}, the solution to the differential inequality can be obtained. Specifically, the solution with the initial condition $||e(t_i)||=0$ is obtained as follows: 
	\begin{equation}
		||e(t)||\le\frac{\rho(||x(t_i)||)+\mu}{||A||}(e^{||A||(t-t_i)}-1)
	\end{equation}
	for all time $t\in[t_i,t_{i+1}), i\in \mathbb{N}$. Recalling the triggering rule \eqref{23}, it is clear that
	\begin{equation}
		\frac{\nu}{||c||\>||A||}\le\frac{\rho(||x(t_i)||)+\mu}{||A||}(e^{||A||\overline{T}_{i2}}-1),
	\end{equation}
	which yields \eqref{27b}. It is evident that when the system state is kept within the practical sliding mode cone, the right-hand sides of both \eqref{27a} and \eqref{27b} have a positive lower bound. Recalling $T_i=\textrm{max}(\tilde{T}_{i1},\overline{T}_{i2})$, this indicates that there is a globally unified lower bound on the inter-event time, namely, ETM \eqref{22}-\eqref{23} is a global ETM. Hence, the proof is completed.
\end{proof}
\begin{figure}[t]
	\centering
	\includegraphics[width=3.9in]{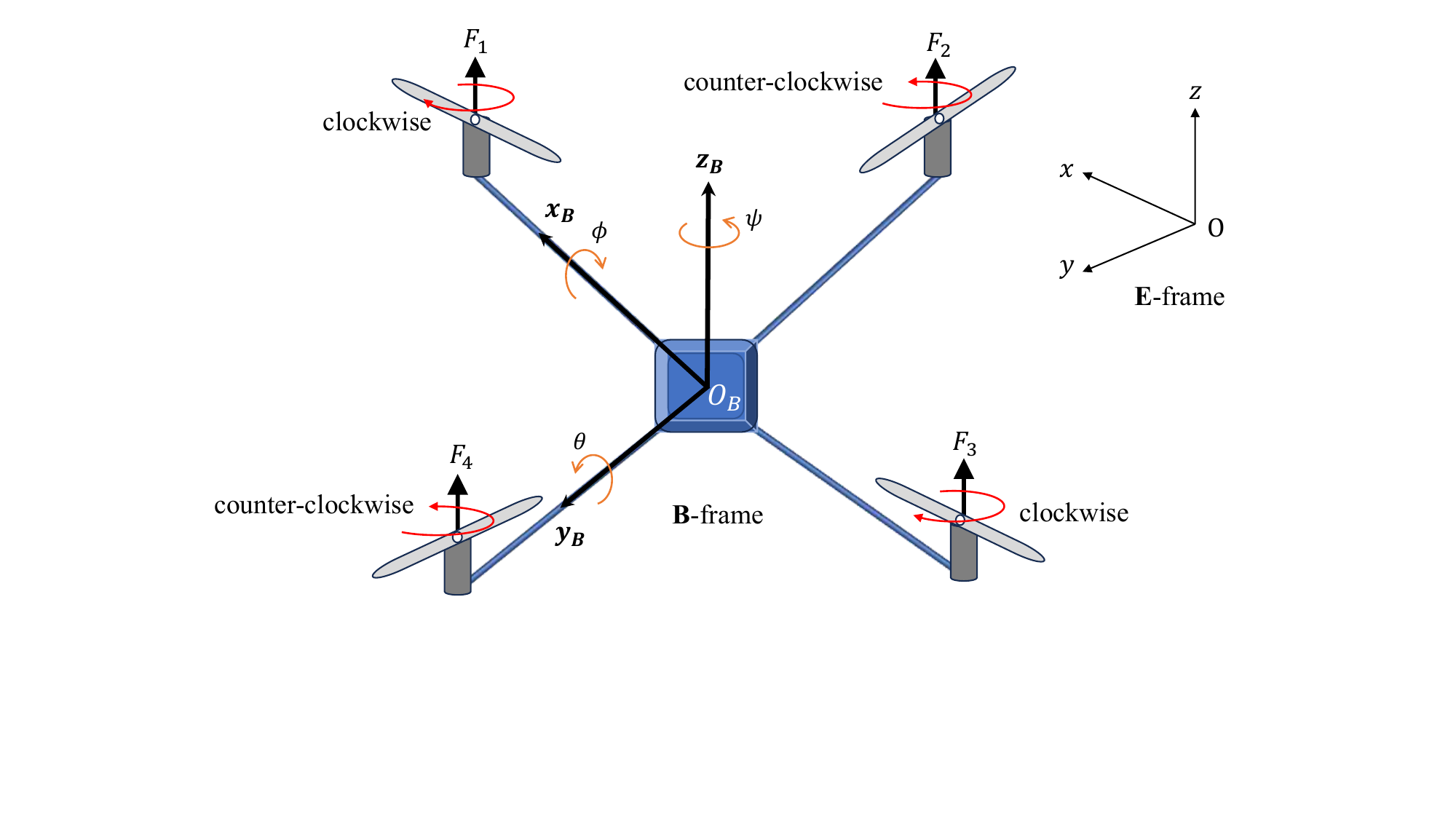} 
	\caption{The configuration of the quadrotor UAV.}\label{Fig. 5}
\end{figure}
\section{Application}
In this section, we consider an application of event-triggered SMC based on the practical sliding mode cone to the quadrotor UAV systems. As shown in Fig.~\ref{Fig. 5}, the four propellers are mounted at equal distances from the center of mass on the cross-shaped arms of the quadrotor (see \cite{7953502}). All propeller rotation axes are fixed and oriented downward. To generate upward lift, the propellers use fixed-pitch blades designed to direct airflow downward. The front and rear propellers (labeled as 1 and 3) rotate clockwise, while the left and right propellers (labeled 2 and 4) rotate counterclockwise. If all four propellers rotate at identical angular velocities, the resulting aerodynamic torques balance each other, producing a net yaw angular velocity of zero. The thrust forces produced by the propellers are indicated in Fig.~\ref{Fig. 5} as $F_i$.\\
\indent Specifically, the  linearized quadrotor UAV system can be modeled as follows \cite{7953502}:
\begin{equation}\label{33}
	\begin{aligned}
		\begin{bmatrix}
			\dot{x}_1(t)\\
			\dot{x}_2(t)\\
			\dot{x}_3(t)\\
			\dot{x}_4(t)\\
			\dot{x}_5(t)
		\end{bmatrix}&=\begin{bmatrix}
			0&1&0&0&0\\
			0&0&g&0&0\\
			0&0&0&1&0\\
			0&0&0&0&\frac{2K_ml}{I_{xx}}\\
			0&0&0&0&-\omega
		\end{bmatrix}\begin{bmatrix}
			x_1(t)\\
			x_2(t)\\
			x_3(t)\\
			x_4(t)\\
			x_5(t)
		\end{bmatrix}
		+\begin{bmatrix}
			0\\0\\0\\0\\\omega
		\end{bmatrix}(u(t)+d(t)),
	\end{aligned}
\end{equation}
where $x(t)=(x_1(t),x_2(t),x_3(t),x_4(t),x_5(t))^T$ represents the system state, indicating the position along the inertial $x$-axis, the linear velocity along the inertial $x$-axis, pitch angle, pitch angular velocity and actuator state for pitch dynamics, respectively. $g$ is the acceleration due to gravity and $I_{xx}$ is the moment of inertia around
$x_B$-axes when the quadrotor rotates around $x_B$-axes. Further, $\omega$ represents the actuator bandwidth, $K_m$ denotes a
positive gain, and $l$ is the distance between the actuator and the center of mass of the quadrotor. Here, $u(t)$ is the control input, and the matched external disturbance $d(t)$ satisfies the aforementioned condition.
Based on the above analysis, we construct the event-triggered SMC laws:
\begin{equation}\label{34}
	u_1(t)=-(\frac{1}{\omega})\big[({c}^TB)^{-1}\left({c}^TAx(t_i)+K(x(t_i))\textrm{sign}s({t}_i)\right)\big]
\end{equation}
\begin{equation}\label{35}
	u_2(t)=-(\frac{1}{\omega})\big[(c^TB)^{-1}(c^TAx(t_i)+K\textrm{sign}s(t_i))\big]
\end{equation}
the correlation coefficients for SMC laws \eqref{34} and \eqref{35} are similar to those for the event-triggered SMC laws \eqref{7} and \eqref{21}. So we can get the following result by using Theorem \ref{thm5-bound}.
\begin{theorem}\label{thm7-bound}
	{\rm Consider quadrotor UAV system \eqref{33} under event-triggered SMC laws \eqref{34} and \eqref{35} and the control algorithm is described by Fig. \ref{Fig. 4}. Then, the state of the addressed system is robustly ultimately bounded. Particularly, a globally unified lower bound on the inter-event time can be derived.}
\end{theorem}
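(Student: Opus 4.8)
The plan is to recognize that Theorem~\ref{thm7-bound} is a direct specialization of the abstract results in Theorems~\ref{thm5-bound} and~\ref{thm6-bound} to the concrete quadrotor dynamics~\eqref{33}, so that the real work is to verify that the structural hypotheses of the general framework hold for this particular $(\tilde A,\tilde B)$ pair. Accordingly, the first step is to check Assumption~1, i.e.\ controllability of the quadrotor. Writing out the controllability matrix $[\tilde B,\tilde A\tilde B,\tilde A^2\tilde B,\tilde A^3\tilde B,\tilde A^4\tilde B]$, one finds it has an anti-triangular sparsity pattern whose anti-diagonal entries are products of $\omega$, $g$ and $\tfrac{2K_m l}{I_{xx}}$; since these physical parameters are nonzero, the determinant is nonzero and the pair is controllable. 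This legitimizes transforming~\eqref{33} into the regular form~\eqref{3} used throughout the paper, and in particular guarantees controllability of $(A_{11},A_{12})$, which is what the subsequent sliding-surface design requires.

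The second step is to account for the input channel. Unlike the normalized form~\eqref{3}, whose input vector has last entry $1$, the quadrotor input vector has last entry $\omega$, and the role of the factor $1/\omega$ appearing in~\eqref{34} and~\eqref{35} is precisely to cancel this scaling. I would verify that substituting $u_1$ from~\eqref{34} (respectively $u_2$ from~\eqref{35}) into~\eqref{33} makes the actuated channel reproduce exactly the closed-loop structure generated by~\eqref{7} (respectively~\eqref{21}) in the standard setting, while the matched disturbance still enters through the same single channel and remains bounded. In the same step I would fix the sliding vectors $c=(c_1^T,1)^T$, $\hat c=(\hat c_1^T,1)^T$ and $\check c=(\check c_1^T,1)^T$ so that $A_{11}-A_{12}c_1^T$, $A_{11}-A_{12}\hat c_1^T$ and $A_{11}-A_{12}\check c_1^T$ are all Hurwitz (possible by the controllability of $(A_{11},A_{12})$ established above), thereby defining the practical sliding mode cone $D^\star$ and the angle $\theta$ for this system.

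With these verifications in hand, the conclusion follows by invoking the general theorems rather than re-proving any estimate. Applying Theorem~\ref{thm5-bound} to the closed loop obtained in the second step shows that the state is driven into the practical sliding mode cone and is robustly ultimately bounded inside the region $\Omega$ determined by $\nu$, $\|A\|$, $\lambda_{\min}\{\tilde Q\}$ and the sliding data, which is the first assertion. Applying Theorem~\ref{thm6-bound} to the ETM~\eqref{22}--\eqref{23} yields the lower bounds $\tilde T_{i1}$ and $\overline T_{i2}$ of~\eqref{27a}--\eqref{27b}; since the right-hand sides of both remain positive even as $\|x(t_i)\|\to\infty$, the inter-event time $T_i=\max(\tilde T_{i1},\overline T_{i2})$ admits a globally unified positive lower bound, which is the second assertion.

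I expect the genuine content to lie entirely in the first two steps: confirming controllability of the specific matrices and confirming that the $1/\omega$ rescaling makes the quadrotor closed loop coincide with the abstract one (in particular that the disturbance stays matched after the rescaling, so that the gain condition analogous to~\eqref{9} can still be met). Once that correspondence is established, the ultimate boundedness and the global inter-event lower bound are inherited directly from Theorems~\ref{thm5-bound} and~\ref{thm6-bound}, so no new Lyapunov or comparison-lemma estimates are needed.
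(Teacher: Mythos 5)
Your proposal is correct and follows essentially the same route as the paper, which simply states that the proof is analogous to that of Theorem \ref{thm5-bound} (together with Theorem \ref{thm6-bound} for the inter-event bound) and omits the details. Your additional verifications — controllability of the quadrotor pair and the role of the $1/\omega$ factor in \eqref{34}--\eqref{35} in matching the closed loop to the abstract form \eqref{3} — are exactly the omitted reduction steps, so the argument is a more explicit version of the paper's own.
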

\begin{proof}
		Owing to the proof of this theorem is analogous to that of Theorem \ref{thm5-bound}, it is omitted here.
\end{proof}

\begin{remark} 
{\rm The application of the event-triggered SMC based on practical sliding mode cone, as exemplified by the quadrotor UAV, not only underscores the practical relevance of our approach but also distinguishes it from existing methodologies. For instance, event-triggered SMC based on practical sliding mode band was used in \cite{https://doi.org/10.1002/rnc.7364} to stabilize the quadrotor UAV systems. Recently, \cite{POUZESH2022107337} investigated event-triggered fractional-order sliding mode control of disturbed quadrotor UAV systems. However, the global ETM was not well considered in these previous results. Namely, the positive lower bound is only guaranteed for inter-event time in semi-global sense in existing literature \cite{https://doi.org/10.1002/rnc.7364,POUZESH2022107337}. However, by using the concept of practical sliding mode cone, a global ETM is proposed in the current study, that is, the inter-event time can possess a unified positive lower bound from zero globally.}
\end{remark}
\section{Illustrative Examples}
In this section, some illustrating examples are presented to show the validity and advantages of the derived results. In particular, an application to quadrotor unmanned aerial vehicles is obtained.

\textit{Example 1:} 
\begin{figure}[t]
	\centering
	\subfigure[System state evolution with event-triggered SMC.]{\includegraphics[width=4.2in,height=2.6in]{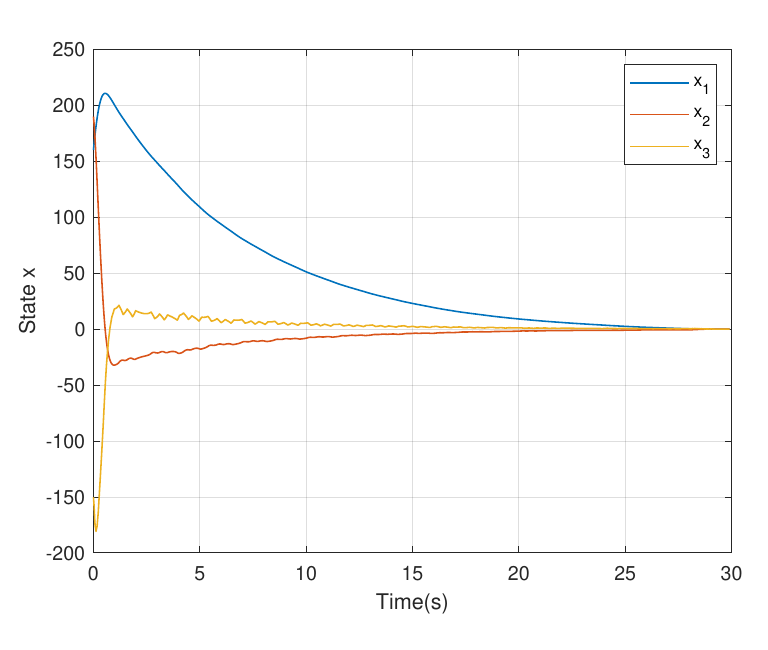}}
	\subfigure[Evolution of inter-event time.]{\includegraphics[width=4.2in,height=2.6in]{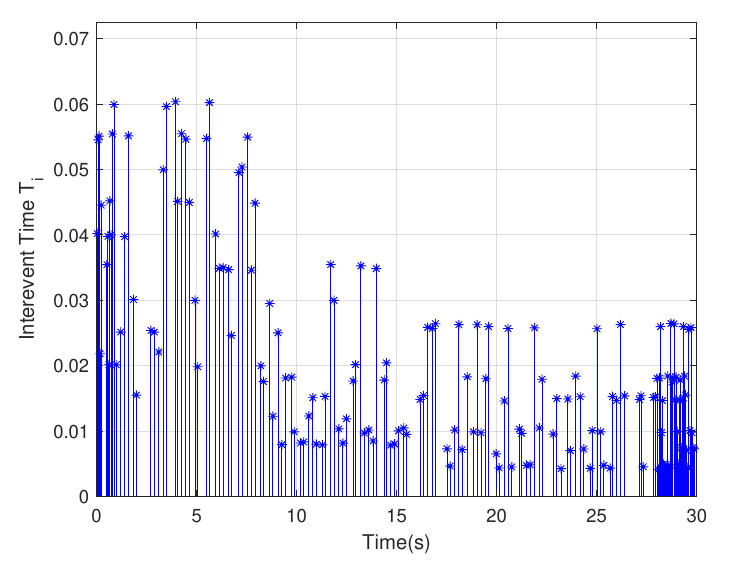}}
	\caption{Simulation results of Example 1.}
	\label{exa.1}
\end{figure}

\begin{figure}[t]
	\centering
	\includegraphics[width=3.9cm]{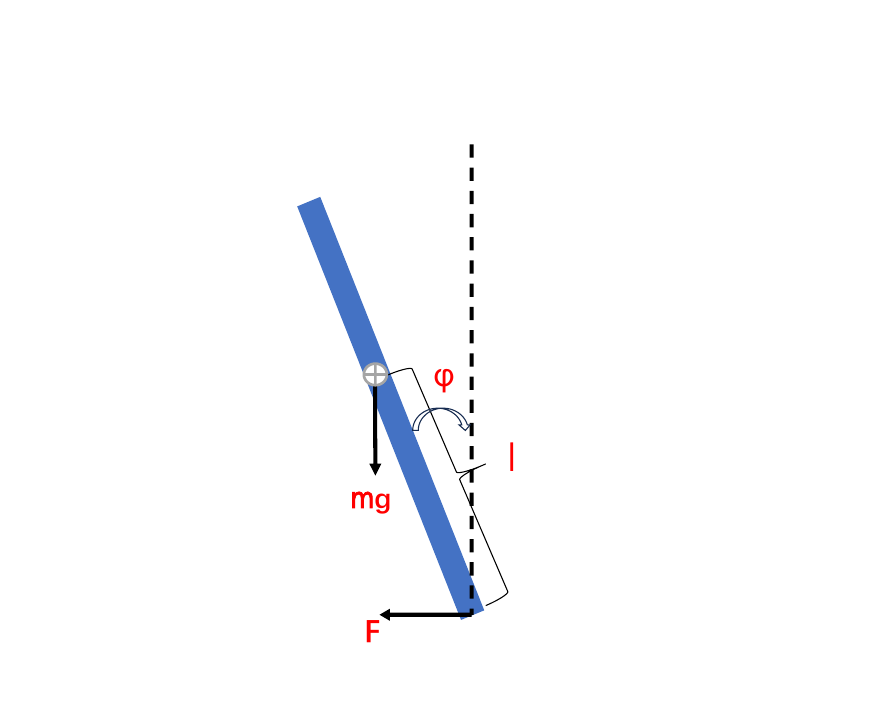} 
	\caption{The rotational single-arm pendulum.}\label{Fig. 6}
\end{figure}
\begin{figure}[t]
	\centering
	\includegraphics[width=4.2in,height=2.6in]{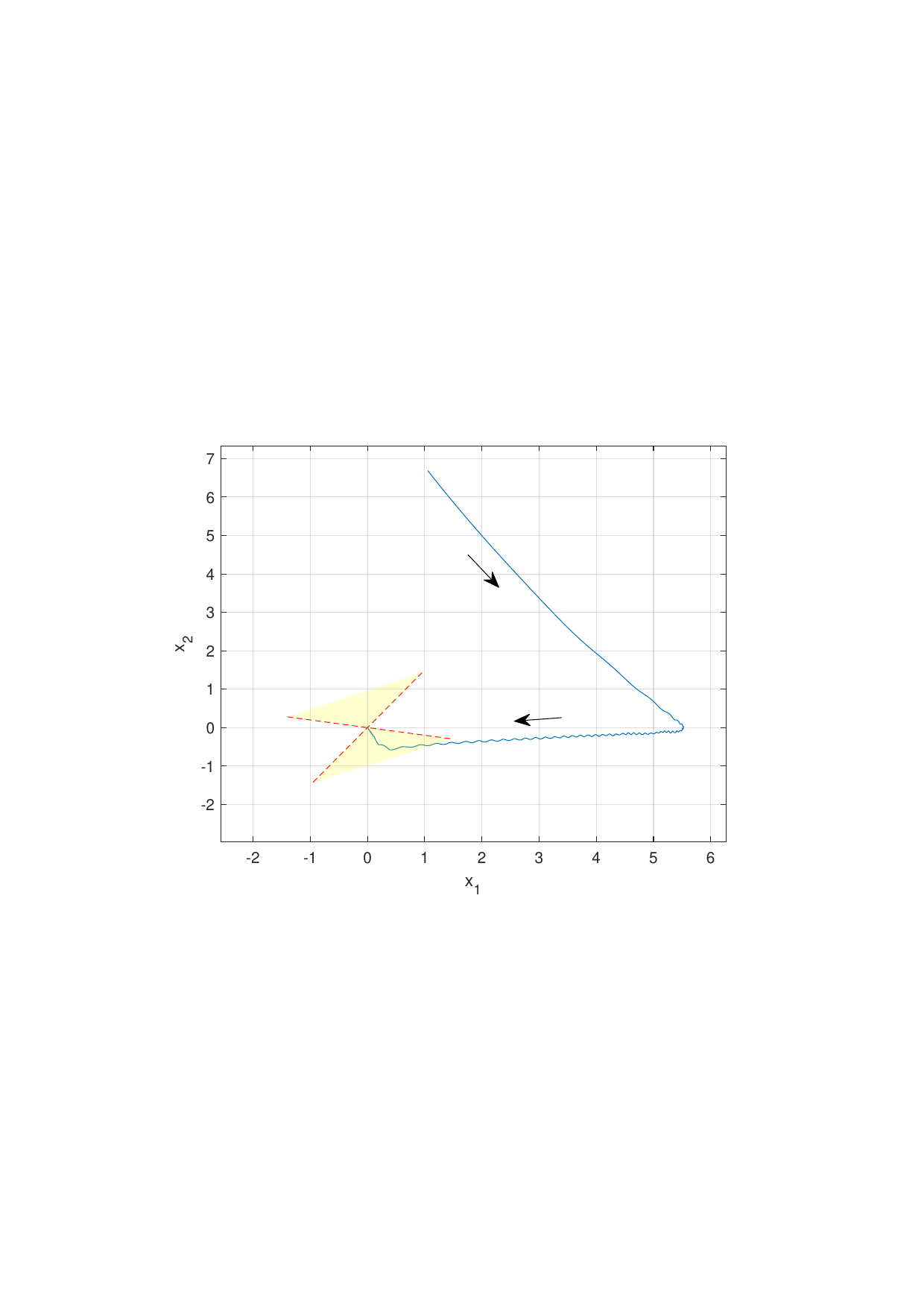} 
	\caption{Phase plane plot of the inverted pendulum system.}\label{exa.2}
\end{figure}
Consider the following LTI system:
\begin{equation*}
	\dot{x}=\left[\begin{array}{ccc}
		0 & 1 & 0\\ 
		0 & 2.1 & 4\\
		-1 & 2 & 3
	\end{array}\right]x+	\left[\begin{array}{c}
		0 \\ 0\\ 1
	\end{array}\right]\![u(t)+0.1\sin(10t)].
\end{equation*}
\indent The initial condition is given by $x_0=(160,190,-150)^T$. Design the sliding variable parameters $s=c^Tx,~c=(3.6,2,1)^T$; $\hat{s}=\hat{c}^Tx,~\hat{c}=(1.23,1.2,1)^T$; $\check{s}=\check{c}^Tx,~\check{c}=(7.4,0.9,1)^T$. Thus, the angel $\theta$ between $\hat{S}$ and $\check{S}$ is approximately equal to $\frac{173\pi}{225}\mathrm{rad}$. The following parameters are chosen by $n=24$, $\sigma=0.34$ and $\beta=0.48$. We can check that $\left|{c}^TB\right|d_\textrm{max}+\sigma||x(t_i)||+\beta=0.49+0.34||x(t_i)||$. Then, we can design event-triggered SMC \eqref{7} with $K(x(t_i))=1.79+0.49||x(t_i)||$ that satisfies condition \eqref{9}.\\
\indent Fig. \ref{exa.1} shows the performance of system under event-triggered SMC \eqref{7} with hybrid ETM \eqref{5}-\eqref{6}. As illustrated in Fig. \ref{exa.1}(a), the system states converge to the equilibrium point, which matches Theorem \ref{thm2-bound}. The inter-event time generated by ETM \eqref{5}-\eqref{6} is shown in Fig. \ref{exa.1}(b). Up to now, by using practical sliding mode band, some interesting works on event-triggered SMC and applications have been presented in \cite{behera2021survey,bandyopadhyay2018event,7048489,chen2025design,cucuzzella2020event,kumari2020event,liu2018event}. But these results can only guarantee that the state of system eventually converge to some neighborhood of the equilibrium point. Compared to these results based on practical sliding mode band, our result can ensures that the system state converges to the equilibrium point by using ideal sliding mode cone. In particular, \cite{liu2020event} also achieves the robust asymptotic stability by using event-triggered SMC with a well-designed dynamic ETM. However, the design of SMC laws requires additional state observer, which may increase the control cost and  computational complexity to some extent.

Actually, we not only achieve the global robust asymptotic stability by using the proposed event-triggered SMC, and the inter-event time has a unified lower bound when the state is relatively large and even infinite. Note that the lower bound for inter-event time can be derived semi-globally in \cite{behera2021survey,bandyopadhyay2018event,7048489}, that is, inter-event time turns to be zero when system states become infinite. Actually, one can observe from \ref{exa.1}(b) that the inter-event time in interval $[0,5]$, in which the system state is relatively large, possesses a positive lower bound by using the method in the current study. \\

\indent \textit{Example 2:} Consider system of rotational single-arm pendulum (see \cite{6360606})
\begin{equation*}
	\left[\begin{array}{c}
		\dot{x}_1(t) \\ 
		\dot{x}_2(t)
	\end{array}\right] 
	\!	=\! 
	\left[\begin{array}{cc}
		0 & 1\\ 
		\frac{mgl}{J} & 0
	\end{array}\right] 
	\left[\begin{array}{c}
		x_1(t) \\ 
		x_2(t)
	\end{array}\right] 
	\!	+\! 
	\left[\begin{array}{c}
		0\\ 
		\frac{l}{J}
	\end{array}\right][u(t) + d(t)],
\end{equation*}
which is shown in Fig. \ref{Fig. 6}. Specifically, $x_1(t)$ and $x_2(t)$ represent the angle of the pendulum, the angular speed, respectively. At the same time, $m$, $g$, $l$, and $J$ represent the mass of the pendulum and the acceleration of gravity, the distance between the center of gravity of the rod and the rotating shaft, and the damping coefficient, respectively. We consider the inverted pendulum system with initial condition $x_1(0)=\frac{\pi}{3}$, $x_2(0)=6.7$ and parameters $m=0.1$, $g=9.8$, $J=0.15$ and $l=0.3$. Here, the sliding parameters are chosen as $c=(2.1,1)^T$, $\hat{c}=(1.32,1)^T$, and $\check{c}=(4.1,1)^T$. The angel $\theta\approx\frac{87\pi}{100}\mathrm{rad}$ and $K(x(t_i))=0.95+1.8||x(t_i)||$. In particular, let external disturbance $d(t)=0.1\textrm{sin}(10t)$ and $n=23$. Using \eqref{9}, the triggering parameters are taken as $\sigma=0.03$ and $\beta=0.25$.\\
\indent The performance of the system with proposed control strategy is shown in Fig.~\ref{exa.2}. Unlike conventional continuous sliding mode control \cite{doi:10.1080/00207179208934240}, which requires persistent control signal updates and suffers from chattering phenomena near the sliding manifold, the proposed event-triggered scheme significantly reduces controller updates while maintaining comparable convergence performance. In comparison to the experimental outcomes presented in \cite{20161012}, our proposed event-triggered SMC achieves that the state trajectories converge to the equilibrium point, as shown in Fig. \ref{exa.2}. And one can see that the system state can reach and then remain in the ideal sliding mode cone region, which is shown in the yellow areas highlighted in Fig. \ref{exa.2}. Note that the previous results based on practical sliding mode band approach (e.g., \cite{7448882,behera2021survey,bandyopadhyay2018event,behera2025event,7048489,KUMARI20191}) can be only capable of driving the system state to a neighborhood  of the equilibrium point, without reaching the origin. Hence, we can determine that the inverted pendulum system is robustly asymptotically stable by applying the result of Theorem \ref{thm3-bound}. \\

\begin{figure}[t]
	\centering
	\subfigure[State evolution of the system \eqref{36} under event-triggered SMC.]{\includegraphics[width=4.2in,height=2.6in]{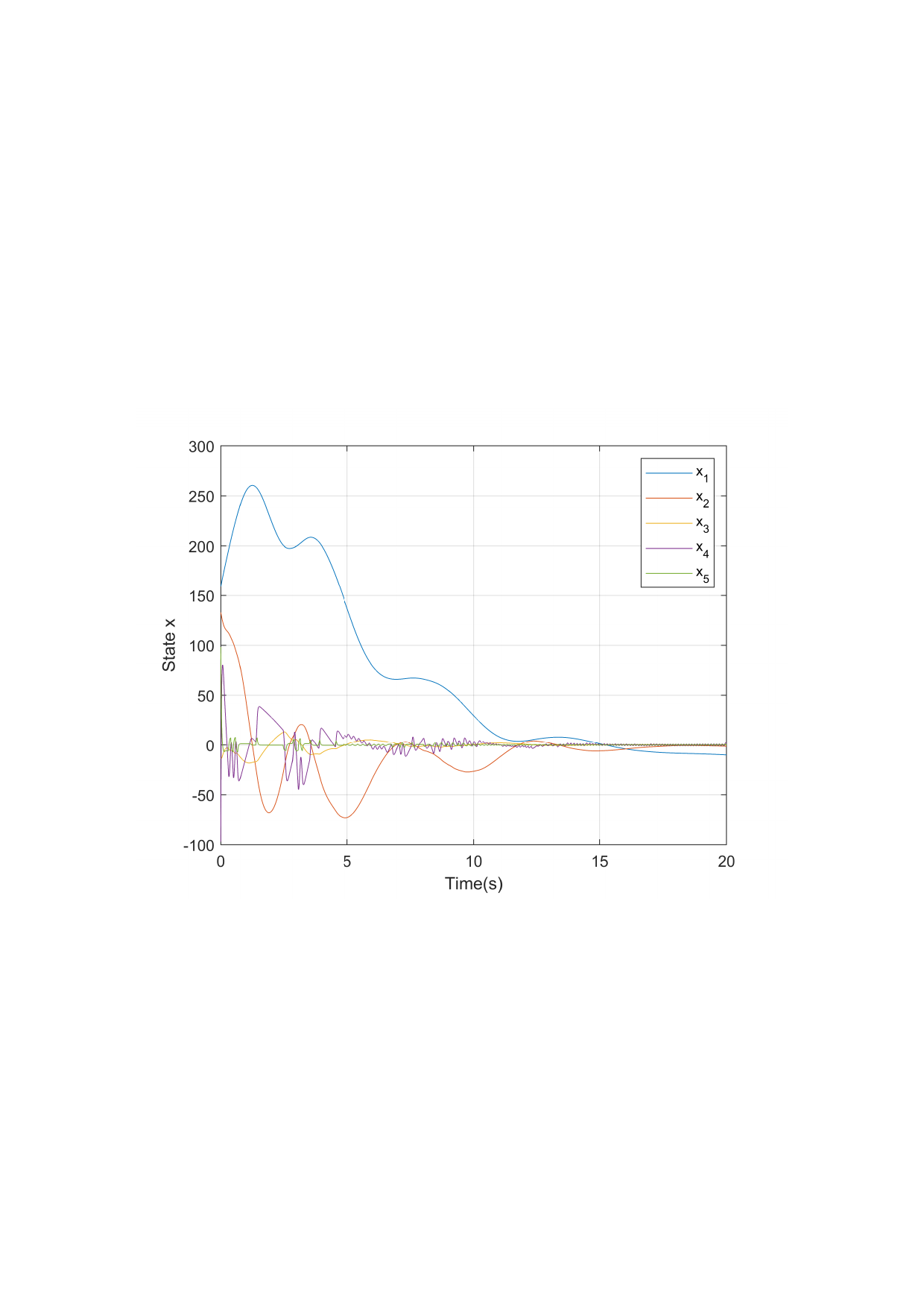}}
	\subfigure[Evolution of inter-event time.]{\includegraphics[width=4.2in,height=2.6in]{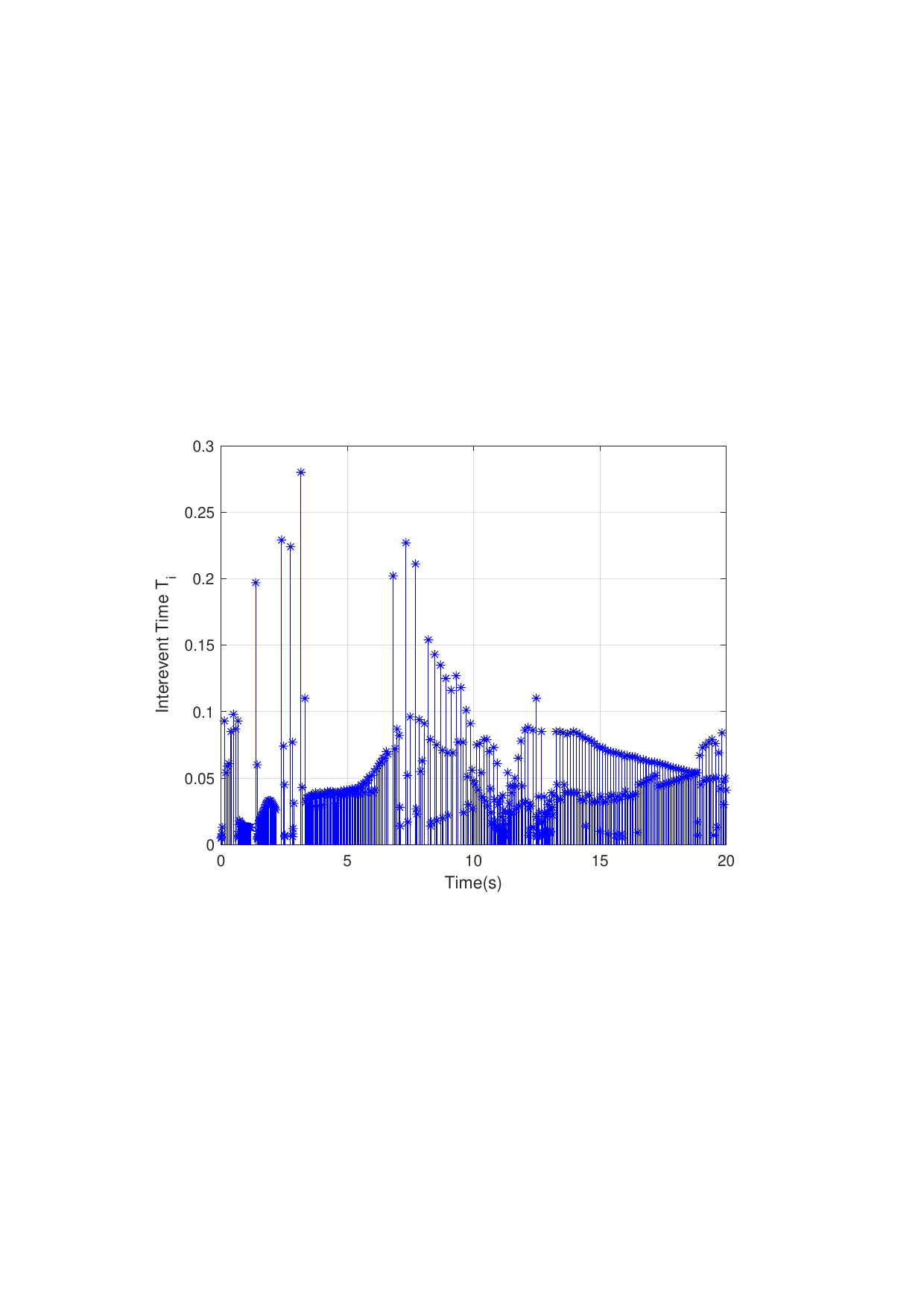}}
	\caption{Simulation results of Example 3.}
	\label{exa.3}
\end{figure}
\indent \textit{Example 3:} In this example, an application of derived results to quadrotor UAV system \ref{33} shall be obtained. To be specific, we consider quadrotor UAV system \ref{33} with parameters: $g=9.8\,\rm{m/s^2}$, $K_m=80\,\rm{N}$, $l=0.3\,\rm{m}$, $I_{xx}=0.6\,\rm{kg\cdot m^2}$, and $\omega=35\,\rm{rad/s}$. Then, the quadrotor UAV system can be given as follows:
\begin{equation}\label{36}
	\begin{aligned}
		\begin{bmatrix}
			\dot{x}_1(t)\\
			\dot{x}_2(t)\\
			\dot{x}_3(t)\\
			\dot{x}_4(t)\\
			\dot{x}_5(t)
		\end{bmatrix}&=\begin{bmatrix}
			0&1&0&0&0\\
			0&0&9.8&0&0\\
			0&0&0&1&0\\
			0&0&0&0&80\\
			0&0&0&0&-35
		\end{bmatrix}\begin{bmatrix}
			x_1(t)\\
			x_2(t)\\
			x_3(t)\\
			x_4(t)\\
			x_5(t)
		\end{bmatrix}
		+\begin{bmatrix}
			0\\0\\0\\0\\35
		\end{bmatrix}(u(t)+d(t)),
	\end{aligned}
\end{equation}
where the initial condition is given by $x_0=(159,133,-13,-105,102)^T$.
In addition, the disturbance is considered in the form of $d(t)=0.5\,\textrm{cos}(0.08\pi t)$. By selecting sliding variable parameters $c=(0.08,0.37,1.89,1.83,1)^T$, $\check{c}=(0.08,0.97,2.01,0.79,1)^T$ and $\hat{c}=(0.14,0.3,3,0.84,1)^T$, one can obtain that $\theta\approx\frac{161\pi}{180}\mathrm{rad}$. Set $\nu=643$, $\sigma=32$, $\beta=169$ and $n=26$ and the discrete time step for simulations is given by $dt=0.001\mathrm{s}$. We design the control gain $K(x(t_i))=915+35||x(t_i)||$, which satisfies condition \eqref{9}. By using Theorem \ref{thm7-bound}, we can infer that quadrotor UAV system \eqref{36} under the event-triggered SMC laws \eqref{34} and \eqref{35} with ETM \eqref{22}-\eqref{23} is ultimately bounded, which is shown in Fig. \ref{exa.3}(a). It is imperative to demonstrate that the inter-event time has a unified lower bound irrespective of the size of system state, as shown in Fig. \ref{exa.3}(b). The problem of whether a lower bound exists for inter-event times in systems under the infinite-state condition is not adequately addressed in \cite{7048489} and \cite{8460428}. Although the ETM based on practical sliding mode band proposed in \cite{https://doi.org/10.1002/rnc.7364} is designed to achieve distributed formation control for quadrotor UAV systems and reduce the consumption of network transmission resources under specific conditions under specific conditions, it overlooks the region of practical sliding mode cone and cannot ensure the lower bound for inter-event time globally. Moreover, in contrast to \cite{POUZESH2022107337}, our approach guarantees the existence of lower bound for the inter-event time, even in the case of an infinite system state. In particular, the inter-event time in $[0,5]$ (see Fig. \ref{exa.3}(b)) is relaxed in the presence of relatively large initial state, which matches the theoretical result derived in the current paper.

\section{Conclusion}
\label{conclusion}
This paper presents the ideal sliding mode cone concept for the first time, and based on this concept, a hybrid ETM is proposed to ensure global robust stability of LTI systems. By considering both the size of the error state and its direction of change, it is shown that the ETM enhances control accuracy and system robustness. Furthermore, it is proven that the system state will asymptotically converge to the equilibrium point, rather than just a neighbourhood of the equilibrium point, as indicated in previous studies. Additionally, inspired by the concept of practical sliding mode band, we extend our work and propose the concept of practical sliding mode cone to demonstrate that the unified lower bound for inter-event time can be ensured globally, that is, the proposed ETM is global. We also explore the practical application of our control strategy to quadrotor UAV systems, highlighting its versatility. The effectiveness of the proposed strategy is validated through both theoretical analysis and numerical simulations, which showcase significant improvements in system performance across various conditions.
\bibliographystyle{siam}
\bibliography{ETSMC}

\end{document}